\newcommand{\rrvert}{\vert}
\newcommand{\llvert}{\vert}
\newtheorem{teo}{Theorem}
\newtheorem{cor}{Corollary}[section]
\newtheorem{lem}[cor]{Lemma}
\newtheorem{prop}[cor]{Proposition}
\newcommand{\tr}{\operatorname{tr}}
\newcommand{\USC}{\operatorname{USC}}
\newcommand{\LSC}{\operatorname{LSC}}
\newcommand{\dist}{\operatorname{dist}}
\newcommand{\esssup}{\mathop{\operatorname{ess}\operatorname{sup}}}
\newcommand{\essinf}{\mathop{\operatorname{ess}\operatorname{inf}}}
\begin{document}
\begin{frontmatter}

\title{Regularity and stochastic homogenization of fully nonlinear
equations without uniform ellipticity}
\runtitle{Regularity and stochastic homogenization}

\begin{aug}
\author[A]{\fnms{Scott N.} \snm{Armstrong}\corref{}\ead[label=e1]{armstrong@ceremade.dauphine.fr}\thanksref{T1}}
\and
\author[B]{\fnms{Charles K.} \snm{Smart}\ead[label=e2]{smart@math.mit.edu}\thanksref{T2}}
\runauthor{S. N. Armstrong and C. K. Smart}
\affiliation{University of Wisconsin and Universit\'e
Paris-Dauphine,\\ and Massachusetts Institute of Technology}
\address[A]{Department of Mathematics\\
University of Wisconsin\\
Madison, Wisconsin 53706\\
USA\\
and\\
Ceremade (UMR CNRS 7534)\\
Universit\'e Paris-Dauphine\\
75775 Paris Cedex 16\\
France\\
\printead{e1}} 
\address[B]{Department of Mathematics\\
Massachusetts Institute of Technology\\
Cambridge, Massachusetts 02139\\
USA\\
\printead{e2}}
\thankstext{T1}{Supported in part by NSF Grant DMS-10-04645 and by a Chaire Junior of la Fondation Sciences Math\'ematiques de Paris.}
\thankstext{T2}{Supported in part by NSF Grant DMS-10-04595.}
\end{aug}

\received{\smonth{9} \syear{2012}}

%
\begin{abstract}
We prove regularity and stochastic homogenization results for certain
degenerate elliptic equations in nondivergence form. The equation is
required to be strictly elliptic, but the ellipticity may oscillate on
the microscopic scale and is only assumed to have a finite $d$th
moment, where $d$ is the dimension. In the general stationary-ergodic
framework, we show that the equation homogenizes to a deterministic,
uniformly elliptic equation, and we obtain an explicit estimate of the
effective ellipticity, which is new even in the uniformly elliptic
context. Showing that such an equation behaves like a uniformly
elliptic equation requires a novel reworking of the regularity theory.
We prove deterministic estimates depending on averaged quantities
involving the distribution of the ellipticity, which are controlled in
the macroscopic limit by the ergodic theorem. We show that the moment
condition is sharp by giving an explicit example of an equation whose
ellipticity has a finite $p$th moment, for every $p< d$, but for which
regularity and homogenization break down. In probabilistic terms, the
homogenization results correspond to quenched invariance principles for
diffusion processes in random media, including linear diffusions as
well as diffusions controlled by one controller or two competing players.
\end{abstract}

%
\begin{keyword}[class=AMS]
\kwd{35B27}
\kwd{35B45}
\kwd{60K37}
\kwd{35J70}
\kwd{35D40}
\end{keyword}
\begin{keyword}
\kwd{Stochastic homogenization}
\kwd{quenched invariance principle}
\kwd{regularity}
\kwd{effective ellipticity}
\kwd{random diffusions in random environments}
\kwd{fully nonlinear equations}
\end{keyword}

\end{frontmatter}

\setcounter{footnote}{2}

\section{Introduction} \label{I}
We prove stochastic homogenization and regularity estimates for fully
nonlinear elliptic equations in nondivergence form without the
assumption of uniform ellipticity. The equations we consider are
strictly elliptic but may have ellipticities which are arbitrarily
large and oscillating on the microscopic scale. We derive new
(deterministic) regularity estimates and show that, under the
assumption that the $d$th moment of the ellipticity is finite, such a
degenerate equation homogenizes in the macroscopic limit to an
effective equation which is uniformly elliptic. Our analysis yields an
explicit estimate for the effective ellipticity which is new, to our
knowledge, even in the linear, uniformly elliptic setting. In terms of
probability, the main homogenization result, in the special case of
linear equations, is equivalent to a quenched invariance principle for
a diffusion in a random environment. For a nonlinear, positively
homogeneous equation, the homogenization result gives similar
information about the quenched behavior of controlled diffusions in
random environments.

The simplest example of interest is the linear equation
\begin{equation}
\label{lin} -\sum_{i,j=1}^da_{ij}
\biggl(\frac{x}\varepsilon,\omega \biggr) u^\varepsilon_{x_ix_j}
= f\qquad\mbox{in } U \subseteq\mathbb R^d,  d\geq1.
\end{equation}
The coefficient matrix $(a_{ij})$, which depends on the random
parameter $\omega$, called \textit{the environment}, is assumed to be
stationary-ergodic and to satisfy the ellipticity condition
\begin{equation}
\label{linellip} \lambda(\omega) |\xi|^2 \leq\sum
_{i,j=1}^da_{ij}(0,\omega) \xi
_i\xi_j \leq\Lambda|\xi|^2 \qquad\mbox{for all } \xi\in\mathbb R^d,
\end{equation}
where $\Lambda> 0$ is a given constant and $\lambda=\lambda(\omega
)$ is a nonnegative random variable. Papanicolaou and Varadhan~\cite
{PV1,PV2} proved that, if the equation is uniformly elliptic, that is,
$\lambda(\omega) \geq\lambda_0 >0$, then in the almost sure
asymptotic limit $\varepsilon\to0$, equation~(\ref{lin}) is governed
by an effective equation of the form
\[
-\sum_{i,j=1}^d\bar a_{ij}
u_{x_ix_j} = f,
\]
where the coefficient matrix $(\bar a_{ij})$ is uniformly elliptic
with ellipticity $\Lambda/\lambda_0$, that is, for every $\xi\in
\mathbb R^d$,
\begin{equation}
\label{effell} \lambda_0|\xi|^2 \leq\sum
_{i,j=1}^d\bar a_{ij}
\xi_i\xi_j \leq\Lambda|\xi|^2.
\end{equation}

In this paper, we extend this result to the case that $\lambda> 0$ and
$\inf\lambda=0$ under the assumption that $\mathbb{E}  [
\lambda^{-d} ] < \infty$. Due to the presence of small pockets
of arbitrarily large ellipticity which become dense for small
$\varepsilon$, it is by no means clear at first glance that~(\ref
{lin}) should behave, in the macroscopic limit, like a uniformly
elliptic equation. Our result demonstrates that this finite moment
condition on $\lambda^{-1}$ ensures that these tiny regions of very
large ellipticity may be effectively controlled and that~(\ref{lin})
becomes a uniformly elliptic equation in the limit $\varepsilon\to0$.
Furthermore, we show that the moment condition is sharp by exhibiting
an explicit example, for each $p <d$ in arbitrary dimension $d\geq1$,
of a nonlinear equation with a finite $p$th moment and having a finite
range of dependence, for which homogenization fails.

Previously, in this linear setting, a result similar to our
homogenization result has been proved in a recent paper of Guo and
Zeitouni~\cite{GZ} using probabilistic methods. They give a quenched
invariance principle for random walks in a random environment, which
has an equivalent formulation in terms of stochastic homogenization of
a discrete equation (on the lattice $\mathbb{Z}^d$ rather than
$\mathbb R^d$). They require a~slightly stronger moment condition,
namely that $\lambda^{-1}$ have a finite $p$th moment for some $p> d$.
That homogenization occurs in the case $p=d$ is new here.

While our results are therefore of interest in the linear setting, we
analyze much more general fully nonlinear equations of the form
\begin{equation}
\label{pde} F \biggl(D^2u^\varepsilon,\frac{x}\varepsilon,
\omega \biggr) = 0.
\end{equation}
Such nonlinear equations are more difficult to analyze than~(\ref{lin}), due to the absence of invariant measures---the key tool used
in~\cite{PV1,PV2} (and~\cite{GZ}) to overcome the problem of the
``lack of compactness.'' The results here are the first for such
equations without a uniform ellipticity assumption. Previously, the
homogenization of fully nonlinear equations in stationary-ergodic media
was proved in the uniformly elliptic case by Caffarelli, Souganidis and
Wang~\cite{CSW}. They introduced a new method based on the obstacle
problem, a strategy which we also use in this paper.

The problem one encounters when trying to homogenize~(\ref{pde})
outside of the uniformly elliptic regime is that most of the regularity
theory needed to implement the method of~\cite{CSW} is destroyed by
(even tiny) regions of high ellipticity. It therefore seems hopeless,
at first glance, to implement the techniques of~\cite{CSW}, since they
make heavy use of the regularity tools.

To overcome this difficulty, prove new (deterministic) regularity
estimates in which the dependence on a uniform upper bound for the
ellipticity is replaced by that of its~$L^d$-norm. In particular, we
prove a decay of the oscillation lemma at unit scale. This result, and
the new arguments we introduce to obtain it, are of independent
interest. Indeed, in sharp contrast to the situation for divergence
form equations, there are very few results in the literature for
equations in nondivergence form, which provide estimates of solutions
in terms of averaged quantities.

The estimates refine the classical regularity theory~\cite{CC} and
require several new ideas. One of the basic techniques involves using
the area formula to estimate the size of certain ``contact sets''
between supersolutions and certain families of smooth test functions.
This method is a generalization of the classical ABP inequality and was
previously used by Cabr{\'e}~\cite{C} to obtain the Harnack inequality
on Riemannian manifolds with nonnegative sectional curvature, by Savin
in his proof of De Giorgi's conjecture~\cite{S2} and in his beautiful proof of
the Harnack inequality in~\cite{S}. In each of these works,
supersolutions are touched from below not only by planes, but by
translations of balls and paraboloids. In the present paper, one of the
key arguments involves touching from below by translations of the \textit{singular function} $|x|^{-\alpha}$, for suitably large $\alpha> 0$.

Since the~$L^d$ norm of the ellipticity is controlled on the
macroscopic scale almost surely by the ergodic theorem, the regularity
results provide effective control on the solutions of~(\ref{pde}) for
small~$\varepsilon$. This allows us to homogenize the equation by
suitably adapting the arguments of~\cite{CSW}. We expect this two-step
approach to homogenization, in which one obtains ``effective''
regularity and then uses this to homogenize the equation, to be useful
in other situations.

As a byproduct of our analysis, we obtain an estimate for the effective
ellipticity which is new even in the uniformly elliptic case, which
states that~(\ref{effell}) holds for a~$\lambda_0>0$ which, in
addition to $\Lambda$ and $d$, depends only on $\mathbb{E}  [
\lambda^{-d}  ]$. It is, to our knowledge, the first such bound
for the homogenized coefficients of nondivergence form equations which
is nontrivial in the sense that it is given in terms on the averaged
microscopic behavior of the equation rather than its uniform properties.

As mentioned above, the homogenization result has an equivalent
probabilistic formulation, at least in the linear case, as a quenched
invariance principle for the corresponding diffusion in the random
environment. It also provides information regarding the recurrence or
transience of the diffusion (see~\cite{GZ}). If $F$ is nonlinear but
positively homogeneous, the fully nonlinear equation~(\ref{pde}) is a
Bellman--Isaacs equation which arises in the theory of stochastic
optimal control and two-player stochastic differential games, and the
homogenization result yields similar information about these more
general diffusion processes. Although we do not explore this point
here, we remark that the recurrence verses transience of such
controlled diffusion processes in an isotropic environment was
characterized in~\cite{ASS}, and this result applied to the effective
operator $\overline F$, together with its proof, gives information
about the corresponding questions for controlled diffusions in random
environments.

We now give the precise statement of our results, beginning with the
modeling assumptions.

\subsection*{The model}
We work in Euclidean space $\mathbb R^d$ in dimension $d\geq1$. The
random environment is modeled by a probability space $(\Omega,\mathcal
{F},\mathbb{P})$ endowed with an ergodic group $\tau=  ( \tau_y
)_{y\in\mathbb R^d}$ of $\mathcal{F}$-measurable, $\mathbb
{P}$-preserving transformations on $\Omega$. That is, the action $\tau
$ of $\mathbb R^d$ on $\Omega$ satisfies
\begin{equation}
\label{Ppres} \mathbb{P}[A] =\mathbb{P}[\tau_y A] \qquad\mbox{for
every }y\in\mathbb R^d, A\in\mathcal{F}
\end{equation}
and, for every $A\in\mathcal{F}$,
\begin{equation}
\label{erg}
\tau_y A = A\qquad\mbox{for every } y\in\mathbb
R^d\qquad\mbox{implies that }\mathbb{P}[A] = 0\quad\mbox{or}\quad \mathbb
{P}[A] =1.\hspace*{-25pt}
\end{equation}
The nonlinear elliptic operator is a map $F\dvtx {\mathbb{S}^d}\times
\mathbb R^d\times\Omega\to\mathbb{R}$ (here ${\mathbb{S}^d}$
denotes the space of $d$-by-$d$ symmetric matrices) which satisfies
each of the following four conditions:

\begin{longlist}[(F3)]
\item[(F1)] \textit{Stationarity}: for every $M\in{\mathbb{S}^d}$,
$y,z\in\mathbb R^d$ and $\omega\in\Omega$,
\[
F(M,y,\tau_z\omega) = F(M,y+z,\omega).
\]

\item[(F2)] \textit{Local uniform ellipticity}: there exists a constant
$\Lambda\geq1$ and a nonnegative random variable $\lambda\dvtx \Omega\to
[0,\Lambda]$ such that $\mathbb{P}[\lambda>0 ] = 1$ and, for every
\mbox{$M,N\in{\mathbb{S}^d}$,} $\omega\in\Omega$ and $y\in B_1$,
\[
{\mathcal{P}}^-_{\lambda(\omega),\Lambda}(M-N) \leq F(M,y,\omega) - F(N,y,\omega) \leq{
\mathcal{P}}^+_{\lambda(\omega),\Lambda} (M-N).
\]
(Here, ${\mathcal{P}}^\pm$ are the usual Pucci extremal operators;
see the next section.)

\item[(F3)] \textit{Uniform continuity and boundedness}: for each $R> 0$,
\[
\bigl\{ F(\cdot,\cdot,\omega) \dvtx  \omega\in\Omega \bigr\} \qquad\mbox{is uniformly
equicontinuous on } B_R \times\mathbb R^d
\]
and
\[
\esssup_{\omega\in\Omega} \bigl\llvert F(0,0,\omega) \bigr\rrvert < +\infty.
\]
Moreover, there exists a modulus $\rho\dvtx [0,\infty) \to[0,\infty)$
and a constant $\sigma> \frac{1}2$ such that, for all $(M,p,\omega)\in
{\mathbb{S}^d}\times\mathbb R^d\times\Omega$ and $y,z\in\mathbb R^d$,
\[
\bigl\llvert F(M,y,\omega) - F(M,z,\omega) \bigr\rrvert \leq\rho \bigl(
\bigl(1+|M|\bigr)|y-z|^\sigma \bigr).
\]

\item[(F4)] \textit{Bounded moment of the ellipticity}: the random
variable $\lambda$ satisfies
\[
\mathbb{E} \bigl[ \lambda^{-d} \bigr] < +\infty.
\]
\end{longlist}

\subsection*{The main result}

We now present the homogenization result, which for simplicity we state
in terms of the Dirichlet problem
\begin{equation}
\label{bvp} \cases{ \displaystyle F \biggl(D^2u^\varepsilon,
\frac{x}\varepsilon,\omega \biggr) = 0, &\quad in $U$,
\vspace*{3pt}\cr
u^\varepsilon= g, &\quad on $\partial U$.}
\end{equation}
Here, $U \subseteq\mathbb R^d$ is a bounded Lipschitz domain and $g\in
C(\partial U)$, and the PDE is to be understood in the viscosity sense
(cf.~\cite{CIL,CC}). By modifying our argument in a very minor way
(only small changes in part three of Section~\ref{HO}), we may
homogenize any other well-posed problem involving $F$, including
parabolic equations like
\[
u_t + F \biggl(D^2u,\frac{x}\varepsilon,\omega
\biggr) = 0
\]
with appropriate boundary/initial conditions.

Note that by (F1) and (F2), for each $\varepsilon> 0$, equation~(\ref
{bvp}) is uniformly elliptic with probability one. Indeed, if we take
$\{ B(x_j,1)  \dvtx   1\leq j \leq k \}$ to be a finite covering of
$\varepsilon^{-1} U$, then its ellipticity is bounded by the random variable
\[
\Lambda\sup_{1\leq j \leq k} \lambda^{-1} (\tau _{x_j/\varepsilon}
\omega ),
\]
which is almost surely finite by (F2). See~(\ref{uez}) below. As a
consequence,~(\ref{bvp}) is well-posed and has a unique viscosity
solution $u^\varepsilon=u^\varepsilon(x,\omega)$ belonging to
$C(\overline U)$.

The main homogenization result is the following theorem.

%
\begin{teo}\label{H}
Assume \textup{(F1)}, \textup{(F2)}, \textup{(F3)} and \textup{(F4)}. Then there exists an event $\Omega
_1\in\mathcal{F}$ of full probability, a positive constant $0 <
\lambda_0< \Lambda$ which depends only on~$d$, $\Lambda$ and
$\mathbb{E}  [ \lambda^{-d}  ]$ and a function $\overline
F\dvtx {\mathbb{S}^d}\to\mathbb{R}$ which satisfies
\[
{\mathcal{P}}^-_{\lambda_0,\Lambda}(M-N) \leq\overline F(M) - \overline F(N) \leq{
\mathcal{P}}^+_{\lambda_0,\Lambda} (M-N)
\]
such that, for every $\omega\in\Omega_1$, every bounded Lipschitz
domain $U\subseteq\mathbb R^d$ and each $g\in C(\partial U)$, the
unique solution $u^\varepsilon=u^\varepsilon(x,\omega)$ of the
boundary value problem~(\ref{bvp}) satisfies
\[
\lim_{\varepsilon\to0} \sup_{x\in U} \bigl\llvert
u^\varepsilon (x,\omega) - u (x) \bigr\rrvert = 0,
\]
where $u\in C(\overline U)$ is the unique solution of the Dirichlet problem
\begin{equation}
\label{Ee} \cases{ \overline F \bigl(D^2u \bigr) = 0, &\quad in
$U$,
\vspace*{3pt}\cr
u = g, &\quad on $\partial U$.}
\end{equation}
\end{teo}

\subsection*{A brief literature review}
The modern regularity theory for elliptic equations in nondivergence
form began in the 1980s with the groundbreaking work of Krylov and
Safonov, Evans, Caffarelli and others, and we refer to~\cite{CC} and
the references there for more. For degenerate equations, we are unaware
of much work that can be compared to ours here. An exception is the
\textit{linearized Monge--Amp\`ere equation} which, although degenerate,
possesses a special geometric structure allowing for the development of
a regularity theory, as discovered by Caffarelli and Guti{\'
e}rrez~\cite{CG} (see also Guti{\'e}rrez and Nguyen~\cite{GN} and the
references therein). Recently, there has been some progress in
obtaining Harnack inequalities and H\"older regularity for certain
nonlinear degenerate equations (see, e.g.,~\cite{I,DFQ,IS}). In these
works, the degeneracy of the equation is typically compensated in some
way by dependence on the gradient. A typical model equation considered is
\begin{equation}
\label{other} |Du|^\gamma F \bigl(D^2u \bigr) = 0,
\end{equation}
where $\gamma> 0$ and $F$ is uniformly elliptic. A solution $u$
of~(\ref{other}) may only be irregular if $|Du|$ is small, and this
allows to compensate for the degeneracy. This is a~very different
situation from the ``naked'' degeneracy of the equations considered here.

The homogenization of linear uniformly elliptic equations in random
media originated in the work of~Papanicolaou and Varadhan~\cite
{PV1,PV2} and~Kozlov \mbox{\cite{K1,K2}}. Later, Dal Maso and Modica~\cite
{DM1,DM2} obtained stochastic homogenization results for nonlinear
equations in divergence form and convex variational problems. The
homogenization of uniformly elliptic, nonlinear equations in
nondivergence form was first considered in the periodic setting by
Evans~\cite{E2} and much later by Caffarelli, Souganidis and
Wang~\cite{CSW} in random media. In contrast to the divergence form
case (cf.~\cite{CK}), little seems to be known about the homogenized
coefficients for nondivergence form equations, even in the periodic
case, other than what is inherited from the uniform properties of the
medium. As far as quantitative homogenization results, we mention the
work of Yurinski{\u\i}~\cite{Y} and Gloria and Otto \mbox{\cite{GO1,GO2}}
for linear equations and Caffarelli and Souganidis~\cite{CS} for fully
nonlinear equations.

\subsection*{Outline of the paper}
In the next section, we give some preliminary results and notation
needed later in the paper and make some comments about our assumptions.
In Section~\ref{EO}, we develop the deterministic regularity theory.
The proof of~Theorem~\ref{H} is then given in Section~\ref{HO}.
Finally, in Section~\ref{CE} we construct an explicit example to show
that the moment condition (F4) is sharp for general nonlinear equations.

\section{Preliminaries} \label{PO}
In this section, we present some background results needed in the rest
of the paper, including the statements of the ergodic theorems we cite,
some remarks about our model and some general remarks concerning
viscosity solutions and semiconcave functions. We begin by reviewing
the notation.

\subsection*{Notation}
The symbols $C$ and $c$ denote positive constants which may vary at
each occurrence and which typically depend on known quantities. We work
in Euclidean space $\mathbb R^d$ for $d\geq1$. We denote the set of
natural numbers by $\mathbb{N}:=\{ 0, 1,\ldots\}$ and $\mathbb{Q}$
is the set of rational numbers. If $r\in\mathbb{R}$, then $\lceil r
\rceil$ denotes the smallest positive natural number which is greater
than or equal to $r$, and we write $\lfloor r \rfloor:= - \lceil-r
\rceil$. The family of bounded Lipschitz subsets of~$\mathbb R^d$ is
denoted by~$\mathcal{L}$. The open ball centered at~$y\in\mathbb R^d$
with radius $r>0$ is \mbox{$B_r(y):=\{ x\in\mathbb R^d  \dvtx    |x-y| < r \}$}
and we write $B_r:=B_r(0)$. If $E\subseteq\mathbb R^d$ is a bounded
Borel set, then $\overline E$ is its closure and $|E|$ is the Lebesgue
measure of $E$. If $f\in L^1(E)$, then the average of $f$ in $E$ is
$\fint_E f(x) \,dx:= |E|^{-1} \int_E f(x)  \,dx$. If $f\dvtx E \to\mathbb
{R}$, then we denote $\operatorname{osc}_E f:= \sup_E f - \inf_E
f$. The characteristic function of a Borel set~$E$ is~$\chi_E$. We
work with a probability space $(\Omega,\mathcal{F},\mathbb{P})$ as
described in the previous section. The indicator random variable of an
event $A\in\mathcal{F}$ is written~$\mathbh{1}_{A}$. We say that
$A\in\mathcal{F}$ is of \textit{full probability} if $\mathbb
{P}[A]=1$. The space of symmetric $d$-by-$d$ matrices is ${\mathbb
{S}^d}$. If $M,N\in{\mathbb{S}^d}$, we write $M \geq N$ if the
eigenvalues of $M-N$ are nonnegative. If $x,y\in\mathbb R^d$, then
$x\otimes y$ denotes the $d$-by-$d$ matrix with entries $(x_iy_j)$. The
trace of $M\in{\mathbb{S}^d}$ is $\tr(M)$. Recall that any $M\in
{\mathbb{S}^d}$ can be uniquely expressed as a difference $M=M_+ -
M_-$ where $M_+M_- = 0$ and $M_+,M_- \geq0$. In particular, if $r\in
\mathbb{R}$, then we write $r_+:= \max\{ 0,r\}$ and $r_-:= (-r)_+$.
The Pucci extremal operators ${\mathcal{P}}^{\pm}$ are defined for $0
< \mu\leq\Lambda$ and $M\in{\mathbb{S}^d}$ by
\[
\mathcal{P}^+_{\mu,\Lambda}(M):= -\mu\tr(M_+) + \Lambda\tr (M_-)
\]
\mbox{and}
\[
{\mathcal{P}}^-_{\mu,\Lambda}(M):= -\Lambda\tr(M_+) + \mu
\tr(M_-).
\]
The elementary properties of the Pucci operators can be found in~\cite
{CC}. Here, we remark only that they are uniformly elliptic, ${\mathcal
{P}}^+_{\mu,\Lambda}$ is convex and ${\mathcal{P}}^-_{\mu,\Lambda
}$ is concave. The set of upper and lower semicontinuous functions on
$V\subseteq\mathbb R^d$ are denoted by $\USC(V)$ and $\LSC(V)$, respectively.

\subsection*{Brief remarks concerning the assumptions}
Note that the restriction of (F2) to $y\in B_1$ is merely for
convenience, it may be extended to all $y\in\mathbb R^d$ by
stationarity. Indeed, the combination of (F1) and (F2) yields, for all
$y,z\in\mathbb R^d$ with $|y-z|< 1$,
\begin{equation}
\label{uez} \quad {\mathcal{P}}^-_{\lambda(\tau_z\omega),\Lambda}(M-N) \leq F(M,y,\omega) - F(N,y,
\omega) \leq{\mathcal{P}}^+_{\lambda(\tau
_z\omega),\Lambda} (M-N).
\end{equation}
In light of~(\ref{uez}), it is convenient to abuse notation by writing
$\lambda(z,\omega)=\lambda(\tau_z\omega)$. Note also that due
to~(F3) we may suppose that
\begin{equation}
\label{lambec} \bigl\{ \lambda(\cdot,\omega) \dvtx  \omega\in\Omega \bigr\}
\qquad\mbox{is uniformly equicontinuous on } \mathbb R^d.
\end{equation}
Otherwise, we simply redefine $\lambda$ to be the largest quantity
which satisfies (F2), which then satisfies~(\ref{lambec}) by (F3). The
operators on the leftmost and rightmost side of~(\ref{uez}) are the
minimal and maximal operators, respectively, which satisfy conditions
(F1)--(F3). In particular, since $\lambda(\cdot,\omega)>0$, our
equation is locally uniformly elliptic in the sense that, almost
surely, $\inf_{V} \lambda(\cdot,\omega)>0$ for each $V\in\mathcal{L}$.

Using ergodicity, we may improve the second part of (F3) to
\[
\sup_{y\in\mathbb R^d} \esssup_{\omega\in\Omega} \bigl\llvert F(0,y,\omega)
\bigr\rrvert < +\infty.
\]
Using then the continuity of $F$ and intersecting the event on which
the latter holds over all the rational points of $\mathbb R^d$, we obtain
\[
\esssup_{\omega\in\Omega} \sup_{y\in\mathbb R^d} \bigl\llvert F(0,y,\omega)
\bigr\rrvert < +\infty.
\]
Applying also (F2), this yields, for $C_0:= \esssup_{\omega\in\Omega
} \llvert F(0,y,\omega) \rrvert $ and all $M\in{\mathbb{S}^d}$,
\begin{equation}
\label{sup} \esssup_{\omega\in\Omega} \sup_{y\in\mathbb R^d} \bigl\llvert
F(M,y,\omega) \bigr\rrvert \leq C_0 + \Lambda\tr(M_++M_-) \leq
C\bigl(1+|M|\bigr).
\end{equation}

The second statement in assumption (F3) is taken in order that the
comparison principle hold in each bounded domain for the operator
$F(\cdot,\cdot,\omega)$ and for every $\omega\in\Omega$. This is
a consequence of the local uniform ellipticity of $F$ and standard
comparison results (see~\cite{CIL}).

\subsection*{A brief remark concerning viscosity solutions}
All differential inequalities in this paper are to be interpreted in
the viscosity sense (cf.~\cite{CC,CIL}). We remark that, while it is
not obvious---in fact, it is \textit{equivalent} to the comparison
principle---we have transitivity of inequalities in the viscosity sense
(see~\cite{A}, Lemma~3.2). For example, if $V\in\mathcal{L}$ and
$u,-v\in\USC(V)$ satisfy
\[
F \bigl(D^2u,y,\omega \bigr) \geq0 \quad\mbox{and}\quad F
\bigl(D^2u,y,\omega \bigr) \leq 0\qquad\mbox{in } V
\]
then formally it follows that for $w:=u-v$ we have
\begin{equation}
\label{wsat} 0 \leq F \bigl(D^2u,y,\omega \bigr) - F
\bigl(D^2u,y,\omega \bigr) \leq{\mathcal {P}}^+_{\lambda(x,\omega),\Lambda}
\bigl(D^2w \bigr).
\end{equation}
We emphasize that we may also deduce ${\mathcal{P}}^+_{\lambda
(x,\omega),\Lambda}(D^2w) \geq0$ in the viscosity sense, and make
other similar formal deductions rigorous, using~\cite{A}, Lemma~3.2.

\subsection*{Pointwise notions of twice differentiability and
$C^{1,1}$} We require the following pointwise regularity notions. We
say that $u \in C(B(0,1))$ is \textit{twice differentiable} at $x\in
B_1$ if there exist $(X,p) \in{\mathbb{S}^d}\times\mathbb R^d$ such that
\[
\limsup_{r\to0} \sup_{y\in B_r(x)} r^{-2}
\biggl\llvert u(y) - u(x) - p \cdot(y - x) - \frac{1}{2} (y-x)\cdot X (y-x)
\biggr\rrvert = 0,
\]
in which case we write $D^2 u(x):= X$ and $Du(x):=p$. We also say that
$u$ is $C^{1,1}$ \textit{on a set} $E \subseteq B(0,1)$ if $u$ is
differentiable at each point of $E$ and
\[
\sup_{x\in E} \sup_{y\in B_1} \frac{\llvert  u(y) - u(x) - Du(x)
\cdot(y - x) \rrvert }{|x-y|^2} < +
\infty.
\]
A function $u$ is \textit{semiconcave} if there exist $k>0$ such that
the map $x \mapsto u(x) - k |x|^2$ is concave.
In this paper, we rely many times on the observation that, for any
$a>0$, a semiconcave function is $C^{1,1}$ on the set of points at
which it can be touched from below by a $C^2$ functions with Hessian
bounded by $a$. Moreover, by Rademacher's theorem and the Lebesgue
differentiation theorem, any $C^{1,1}$ function on a set $E$ is twice
differentiable at (Lebesgue) almost every point of $E$.

\subsection*{Infimal convolution}
We recall a standard tool (cf.~\cite{CIL,CC} for details) in the
theory of viscosity solutions. We denote the infimal convolution of~$u
\in\LSC(B_1)$ by
\begin{equation}
\label{einfconv} u_\varepsilon(x):= \inf_{y \in B_1} \biggl( u(y)
+ \frac
{2}{\varepsilon}|x - y|^2 \biggr).
\end{equation}
The function $u_\varepsilon$ is more regular than $u$ and, in
particular, is semiconcave. It is a good approximation to $u$ in the
sense that $u_\varepsilon\to u$ locally uniformly in $B_1$ as
$\varepsilon\to0$. Moreover, if $f,\lambda\in C(B_1)$, $\lambda> 0$ and
\[
{\mathcal{P}}^+_{\lambda(x), \Lambda} \bigl(D^2 u \bigr) \geq f
\qquad\mbox{in } B_1,
\]
then there exist sequences of functions $\lambda'_\varepsilon,
f'_\varepsilon\in C(B_1)$ which converge locally uniformly to $\lambda
$ and $f$, respectively, as $\varepsilon\to0$, such that
$u_\varepsilon$ satisfies
\[
{\mathcal{P}}^+_{\lambda'_\varepsilon(x), \Lambda} \bigl(D^2 u_\varepsilon \bigr)
\geq f'_\varepsilon\qquad\mbox{in } B_{1-r_\varepsilon},
\]
where $r_\varepsilon\to0$ as $\varepsilon\to0$. We refer to~\cite
{CC,CIL} for details. For us, the principle utility of these
approximations is the semiconcavity of $u_\varepsilon$. If
$u_\varepsilon$ can be touched from below by a smooth function
$\varphi$ at some point $z \in B_1$, then $u_\varepsilon$ is
$C^{1,1}$ at $z$, with norm depending only $\varepsilon$ and $|D^2
\varphi(z)|$. See~\cite{CC}, Theorem~5.1.

\subsection*{Statements of the ergodic theorems}
We next recall the two versions of the (multiparameter) ergodic theorem
used in this paper. The first is nearly a consequence of the second,
but since it is simpler we give it separately. A nice proof can be
found in Becker~\cite{Be}.

We emphasize that the assumptions on the probability space $(\Omega,\mathcal{F},\mathbb{P})$, in particular~(\ref{Ppres}) and~(\ref
{erg}), are in force. Recall that $\mathcal{L}$ denotes the set of all
bounded Lipschitz subsets of $\mathbb R^d$.

%
\begin{prop}[(Wiener's ergodic theorem)]\label{ET}
Let $f \in L^1(\Omega)$. Then there exists a subset $\Omega_0\in
\mathcal{F}$ of full probability such that, for every $\omega\in
\Omega_0$ and $V\in\mathcal{L}$,
\begin{equation}
\label{ETeq} \lim_{t\to\infty} \fint_{tV} f(
\tau_y\omega) \,dy = \mathbb {E} [ f ].
\end{equation}
In particular, the map $y\mapsto f(\tau_y\omega)$ belongs to $
L^1_{\mathrm{loc}}(\mathbb R^d)$ for every $\omega\in\Omega_0$.
\end{prop}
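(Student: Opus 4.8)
The plan is to reduce to the case where $V$ is a cube, in which~\eqref{ETeq} is the classical multiparameter pointwise ergodic theorem, and then to pass to a general bounded Lipschitz domain by an inner/outer approximation. The reason for not applying an ergodic theorem directly to the averaging sets $\{tV\}_{t>0}$ is that these need not satisfy the regularity conditions (of Tempel'man--Shulman type) under which such theorems are usually formulated; cubes (centered anywhere) are harmless, and finite unions of cubes can pinch an arbitrary Lipschitz domain arbitrarily well.

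First I would invoke the multiparameter ergodic theorem for the measure-preserving $\Rd$-action $\tau$ (see Becker~\cite{Be}): for any $g\in L^1(\O)$ there is a full-probability event on which $\fint_{tQ} g(\tau_y\omega)\,dy$ converges, as $t\to\infty$, to the conditional expectation of $g$ on the $\sigma$-algebra $\mathcal I\subseteq\F$ of $\tau$-invariant events, simultaneously for every dyadic cube $Q\subseteq\Rd$ (a countable family). By the ergodicity hypothesis~\eqref{erg}, $\mathcal I$ is $\P$-trivial, so this limit is simply $\E[g]$. Applying this to $g=f$ and to $g=|f|$, I let $\O_0\in\F$ be the intersection of the two resulting full-probability events. (Joint measurability of $(y,\omega)\mapsto f(\tau_y\omega)$, which is standing in this setup, together with Fubini and the $L^1$ bound just obtained for $|f|$, shows that $y\mapsto f(\tau_y\omega)\in L^1_{\mathrm{loc}}(\Rd)$ for $\omega\in\O_0$, which is the last assertion.) Since the average of $y\mapsto g(\tau_y\omega)$ over $tK$, for a finite union $K$ of essentially disjoint dyadic cubes, is the convex combination of the averages over the dilated pieces with weights proportional to their measures, the convergence on $\O_0$ automatically upgrades from single dyadic cubes to finite unions of dyadic cubes.

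The second and main step is to show that this single event $\O_0$ works for every $V\in\L$. The crucial geometric input is that, because $V$ is open and bounded with $|\partial V|=0$ (true for Lipschitz domains, whose boundary is locally a Lipschitz graph, hence Lebesgue-null in $\Rd$), for each $\eta\in(0,1)$ one can trap $V$ between finite unions of dyadic cubes $K^-\subseteq V\subseteq\overline V\subseteq K^+$ with $|K^+\setminus K^-|\le\eta$, and these $K^\pm$ depend only on $V$ and $\eta$, not on $\omega$. Then, for $\omega\in\O_0$, I would compare $\fint_{tV} f(\tau_y\omega)\,dy$ with $|tV|^{-1}\int_{tK^-} f(\tau_y\omega)\,dy$ and bound the difference by $|tV|^{-1}\int_{t(K^+\setminus K^-)}|f(\tau_y\omega)|\,dy$, using $tV\setminus tK^-\subseteq t(K^+\setminus K^-)$. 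Letting $t\to\infty$ with $|tV|=t^\d|V|$, and applying Step 1 to $f$ over $K^-$ and to $|f|$ over $K^+\setminus K^-$, one gets $\limsup_{t\to\infty}|\fint_{tV} f(\tau_y\omega)\,dy-\E[f]|\le 2\eta\,|V|^{-1}\E[|f|]$, since $|V|-|K^-|\le|K^+\setminus K^-|\le\eta$ and $|\E[f]|\le\E[|f|]$. Letting $\eta\to0$ completes the proof.

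I expect the only genuinely delicate point to be exactly this requirement that the exceptional set be fixed independently of $V$ — which is what dictates the two-step structure: $\O_0$ is assembled from the countably many convergence statements over dyadic cubes, and the cube approximation of $V$ is purely deterministic. Everything else (the cited multiparameter ergodic theorem, the nullity of Lipschitz boundaries, the inner/outer cube approximation, the additivity of the integral over finite unions of cubes, and the joint-measurability/Fubini remark) is standard.
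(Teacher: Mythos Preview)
Your approach is essentially the paper's: cite Becker for a basic class of sets, pass to axis-parallel cubes, then to finite unions of cubes, then to a general $V\in\L$ by approximation. The one point on which the paper is more explicit is the passage to an arbitrary cube $Q$: Becker's result, as cited there, requires the set to be star-shaped with respect to the origin, so for a cube $Q$ not containing the origin the paper embeds $Q$ in a larger cube $\tilde Q\ni 0$ with $\tilde Q\setminus Q$ also star-shaped and subtracts---you invoke Becker directly on dyadic cubes without this justification. Conversely, your approximation step (applying the theorem to $|f|$ on the fringe $K^+\setminus K^-$ to control the error) is spelled out in more detail than the paper's one-line ``by approximation.''
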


The version of Proposition~\ref{ET} proved in~\cite{Be} actually
requires $V$ to be star-shaped with respect to the origin. As is well
known, this restriction may be removed as follows. First we notice that
the conclusion holds for any cube $V=Q$ with sides parallel to the
coordinate axes, since any such cube either contains the origin or has
the property that, for some larger cube $\widetilde Q$, both
$\widetilde Q \setminus Q$ and $\widetilde Q$ are star-shaped with
respect to the origin. Since it holds for such cubes, it holds for an
arbitrary finite disjoint union of them, and hence any $V\in\mathcal
{L}$ by approximation.

We next state the multiparameter subadditive ergodic theorem of Akcoglu
and Krengel~\cite{AK} as modified by Dal Maso and Modica~\cite{DM2},
which requires some further notation. We denote by $\mathcal{U}_0$ the
family of bounded subsets of $\mathbb R^d$. A function $f\dvtx \mathcal
{U}_0\to\mathbb{R}$ is \textit{subadditive} if
\[
f(A) \leq\sum_{j=1}^k
f(A_j),
\]
whenever\vspace*{-1pt} $k\in\mathbb{N}$ and $A,A_1,\ldots,A_k \in\mathcal{U}_0$
are such that $\bigcup_{j=1}^k A_j\subseteq A$, the sets $A_1,\ldots,A_k$
are pairwise disjoint and $\llvert  A \setminus\bigcup_{j=1}^k A_j
\rrvert  = 0$. Let $\mathcal{M}$ be the collection of subadditive
functions $f\dvtx \mathcal{U}_0\to\mathbb{R}$ which satisfy
\[
0 \leq f(A) \leq|A| \qquad\mbox{for every }A \in\mathcal{U}_0.
\]
A \textit{subadditive process} is a function $f\dvtx \Omega\to\mathcal
{M}$. It is sometimes convenient to write $f(A,\omega) = f(\omega
)(A)$, in which case we have $f(A,\tau_y \omega) = f(y+ A,\omega)$.

%
\begin{prop}[(Subadditive ergodic theorem)]
\label{SET}
Let $f\dvtx \Omega\to\mathcal{M}$ be a subadditive process. Then there
exists an event $\Omega_0\in\mathcal{F}$ of full probability and a
constant $0 \leq a \leq1$ such that, for every $\omega\in\Omega_0$
and $V\in\mathcal{L}$,
\begin{equation}
\lim_{t\to\infty} \frac{f(tV,\omega)}{|tV|} = a.
\end{equation}
\end{prop}

This version of the subadditive ergodic theorem is~\cite{AK}, Proposition~1, in the special case that $\mathcal{L}$ is replaced by the
family of all cubes, and we recover the general case by an easy
approximation argument.

\section{Regularity in the macroscopic limit}\label{EO}
The classical regularity theory for uniformly elliptic equations (as
developed, e.g., in~\cite{CC}) does not directly help us to
homogenize~(\ref{pde}) because, as $\varepsilon$ becomes small, the
ergodic theorem guarantees that the set where~(\ref{pde}) has very
high ellipticity becomes dense. What we need are estimates which do not
degenerate as $\varepsilon\to0$, and for this it is necessary to
revisit the regularity from the beginning.

What the ergodic theorem ensures is that, almost surely in~$\omega$,
for every $\mu> 0$,
\begin{equation}
\label{erghelp} \lim_{\varepsilon\to0} \fint_{V \cap\{ \lambda< \mu\} } \lambda
^{-d} \biggl( \frac{x}\varepsilon, \omega \biggr) \,dx =
\mathbb{E} \bigl[ \lambda^{-d} \mathbh{1}_{ \{ \lambda< \mu\}} \bigr].
\end{equation}
In this section we develop a deterministic regularity theory for
solutions of~(\ref{pde}) which will be robust in the almost sure
macroscopic limit $\varepsilon\to0$ by virtue of~(\ref{erghelp}).

Since\vspace*{1pt} the random environment plays no role here, we drop dependence
on~$\omega$. Throughout this section, we consider a continuous function
$\lambda\dvtx \mathbb R^d\to(0,\Lambda]$, and we study the regularity of
subsolutions and/or supersolutions of the extremal operators ${\mathcal
{P}}^\pm_{\lambda(x),\Lambda}$ in bounded\vspace*{1pt} Lipschitz domains $V\in
\mathcal{L}$. Our estimates must depend only on $d$, $\Lambda$ and,
for $\mu> 0$, the quantities
\[
\int_{V \cap\{ \lambda< \mu\} } \lambda^{-d} ( x ) \,dx.
\]
As it is purely deterministic, the regularity developed here is of
independent interest.

The primary goal is to obtain an improvement of oscillation result on
unit scales, giving us a modulus of continuity [and a H\"older estimate
in the macroscopic limit for solutions of~(\ref{pde})]. Our arguments
are loosely based on the arguments in the classical regularity
theory~\cite{CC}, with some nice modifications due to Savin~\cite{S},
but require several new ideas to overcome the degeneracy of the
equation. For instance, ``two important tools'' are introduced in~\cite{CC},
Section~4.1, which are used repeatedly in what has become the
standard proof of the Harnack inequality. In our situation, neither of
these tools can be applied in a straightforward way.

First, showing that an appropriate barrier (or ``bump'') function
exists---which is easy in the uniformly elliptic situation (see~\cite{CC}, Lemma~4.1)---is a very nontrivial matter. We construct a barrier
by touching a candidate function from below by translations of the
singular function~$|x|^{-\alpha}$ with~$\alpha\gg1$ and then
adapting the proof of the ABP inequality to show that the corresponding
contact set would be too large if the function failed to be a barrier.
Second, the measure-theoretic argument involving the Calder\'
on--Zygmund cube decomposition must be altered due to the presence of
``bad'' cubes of high ellipticity, and we use an alternative idea based
on the Besicovitch covering theorem.

The development is essentially self-contained and depends also on some
novel uses of the area formula for Lipschitz functions, similar to the
proof of the ABP inequality, and partially inspired by~Savin~\cite
{S,S2}. The main result of this section is the following proposition.

%
\begin{prop}[(Decay of oscillation)]
\label{poscillation}
There exists $\delta> 0$, depending only on $d$ and $\Lambda$, such
that if $0< \mu< \frac12$ and
\[
\int_{B_1 \cap\{ \lambda< \mu\}} \lambda^{-d}(x) \,dx < \delta,
\]
then there exist constants $0 < \tau< 1$ depending only on $d$,
$\Lambda$ and $\mu$, such that for all $\alpha> 0$ and $u \in
C(B_1)$ satisfying
\[
{\mathcal{P}}^+_{\lambda(x), \Lambda} \bigl(D^2 u \bigr) \geq-\alpha\quad
\mbox{and}\quad{\mathcal{P}}^-_{\lambda(x), \Lambda} \bigl(D^2 u \bigr)
\leq \alpha\qquad\mbox{in } B_1,
\]
we have
\[
\mathop{\operatorname{osc}}_{B_{1/8}} u \leq\tau\mathop{\operatorname{osc}}_{B_1}
u + \alpha.
\]
\end{prop}

Proceeding with the proof of Proposition~\ref{poscillation}, we begin
with three applications of area formula for Lipschitz functions
(cf.~\cite{EG}), which asserts that
\[
\bigl|f(E)\bigr| = \int_E \bigl\llvert \det Df(x) \bigr\rrvert \,dx
\]
for all Lebesgue measurable sets $E \subseteq\mathbb{R}^d$ and
injective Lipschitz maps $f \dvtx  E \to\mathbb{R}^d$\hspace*{-0.8pt}.

The first is the Alexandroff--Bakelman--Pucci (ABP) inequality. The
version we give here is not new: it is actually a corollary to the
proof of \cite{CC}, Theorem~3.2. We include a proof below both for
completeness and in order to introduce the style of argument we use
below, in a more complicated form, to obtain the barrier function. The
argument here is much simpler than the one in~\cite{CC}, which is due
to the observation that a semiconcave function is necessarily $C^{1,1}$
on the set where it can be touched from below by a plane.

%
\begin{prop}[(ABP inequality)]
\label{pabp}
Let $f \in C(B_1)$ and suppose that $u\in\LSC(\overline B_1)$ satisfies
\[
\cases{ {\mathcal{P}}^+_{\lambda(x), \Lambda} \bigl(D^2 u
\bigr) \geq- f, &\quad in $B_1$,
\vspace*{3pt}\cr
u \geq0, &\quad on $\partial B_1$.}
\]
Then
\[
u_-(0) \leq \biggl( \frac{1}{|B_1|} \int_{\{\Gamma_u = u \}}
\lambda^{-d}(x) f_+^d(x) \,dx \biggr)^{1/d},
\]
where
\[
\Gamma_u(x):= \sup_{p \in\mathbb{R}^d} \inf
_{y \in B_1} \bigl( p \cdot(x - y) - u_-(y) \bigr)
\]
is convex envelope of $-u_-:= \min\{ 0, u \}$.
\end{prop}

\begin{pf}
By approximating $u$ by its infimal convolution, we may assume that $u$
is a semiconcave [it is straightforward to see that the limsup of the
contact sets for $u_\varepsilon$ in~(\ref{einfconv}) are contained
in the contact set for $u$].

Let $a:= - u(0)$ and assume $a>0$. Since $u \geq0$ on $\partial B_1$,
for every $p\in B_a$, there exists $\bar z(p)\in B_1$ such that
$u(\bar z(p)) < 0$ and the map $x \mapsto-u_-(x) - p \cdot x$
attains its infimum over $B_1$ at $\bar z(p)$. Note that we can
arrange for $\bar z\dvtx  B_a\to B_1$ to be Lebesgue measurable by
choosing $\bar z$, say, lexicographically among the minimizers
closest to the origin. Since $u$ is semiconcave and can be touched from
below by a plane on $A:= \bar z( B_a)$, it is $C^{1,1}$ on $A$.
In particular, $\bar z$ has a Lipschitz inverse $\bar p\dvtx  A
\to B_1$ given by $\bar p(z):= Du(z)$.

By Rademacher's theorem (cf.~\cite{EG}) and the Lebesgue
differentiation theorem,~$u$ is twice differentiable at Lebesgue almost
every point of~$A$. At every such~$z\in A$, we have that $D^2u(z) \geq
0$, since $u$ can be touched from below by a plane at~$z$, and so the
supersolution inequality gives
\[
-f(z) \leq{\mathcal{P}}^+_{\lambda(z),\Lambda} \bigl(D^2u(z) \bigr) = -
\lambda(z) \tr \bigl( D^2u(z) \bigr).
\]
Thus, at almost every point $z \in A$,
\begin{equation}
0 \leq D^2u(z) \leq\lambda^{-1}(z) f_+(z) I.
\end{equation}
The area formula yields
\[
a^d|B_1| = |B_a| = \int
_A \bigl|\det D \bar p(x)\bigr| \,dx = \int_A
\bigl|\det D^2u(x)\bigr| \,dx \leq\int_A
\lambda^{-d}(x) f_+^d(x) \,dx.
\]
Since $A \subseteq\{ \Gamma_u = u \}$, we obtain the proposition.
\end{pf}

Using a more sophisticated version of the above argument, we next
construct the barrier function, which below plays a critical role in
the proof of Lemma~\ref{liter} below, similar to that of the ``bump''
function in~\cite{CC}. It is also needed in the next section in proof
of Theorem~\ref{H} to verify that the limit function satisfies the
Dirichlet boundary condition.

%
\begin{lem}
\label{lbarrier}
For each $0 < r <\frac{1}2$, there exists $\delta> 0$, depending only
on $d$~and~$\Lambda$, such that if $0 < \mu< \frac12$ and
\[
\int_{B_1 \cap\{ \lambda< \mu\}} \lambda^{-d}(x) \,dx < \delta
r^d,
\]
then there exists a constant $\beta> 0$, depending only on $d$,
$\Lambda$, $r$ and $\mu$, such that for each $u\in\LSC(\overline
B_1\setminus B_r)$ satisfying
\[
\cases{ {\mathcal{P}}^+_{\lambda(x), \Lambda} \bigl(D^2 u \bigr) \geq- 1, &
\quad in $B_1 \setminus\overline B_r$,
\vspace*{3pt}\cr
u \geq0, &\quad on $\partial B_1$,
\vspace*{3pt}\cr
u \geq\beta, &\quad on $\partial B_r$,}
\]
we have $u > 0$ on $B_{1 - r} \setminus B_r$.
\end{lem}

\begin{pf}
By approximating $u$ by its infimal convolution, we may assume that $u$
is semiconcave. Define
\[
\alpha:= \frac{2 (d-1) \Lambda+2}{\mu} \quad\mbox{and}\quad \beta:= \biggl(
\frac{4}{r} \biggr)^{\alpha}.
\]

The idea is to show that if $u$ is negative somewhere in
$B_{1-r}\setminus B_r$, then it can be touched from below by (too many)
small translations of the singular function $\phi(x):= 2^{\alpha}
|x|^{-\alpha}$. Let us suppose that $u(x_0) < 0$ for some $x_0 \in
B_{1-r} \setminus B_r$. As a consequence we find that, for every $y\in
B_{r/2}$, the map $x\mapsto u(x) - \phi(x-y)$ attains its infimum at
some point $\bar z(y) \in B_1\setminus B_r$. To verify this we
check that, for $y\in B_{r/2}$,
\begin{eqnarray}
\inf_{x\in\partial B_{1}} \bigl(u(x) - \phi(x-y) \bigr) &\geq& -
2^\alpha \biggl\llvert 1-\frac{r}2 \biggr\rrvert ^{-\alpha}
\nonumber\\[-8pt]\\[-8pt]
&\geq&- \phi(x_0-y) > u(x_0) - \phi(x_0-y)\nonumber
\end{eqnarray}
and, by our choice of $\beta$,
\begin{equation}
u(x) \geq\beta\geq\phi(x-y) \qquad\mbox{for every } x\in \partial
B_r.
\end{equation}
It is easy to arrange for the function $\bar z\dvtx B_{r/2} \to
B_1\setminus B_r$ to be Lebesgue measurable. To obtain the
contradiction, we eventually apply the area formula to the inverse of~$\bar z$. Most of the rest of the argument is concerned with
showing that the image of $\bar z$ is contained in the region
where $\lambda$ is small, that $\bar z$ has an inverse $\bar y$ and estimating the determinant of the Jacobian of $\bar y$.

The Hessian of $\phi$ is given by
\begin{equation}
\label{eHessphi} D^2\phi(x) = \alpha2^\alpha|x|^{-\alpha-2}
\biggl( (\alpha+1) \frac{x\otimes x}{|x|^2} - \biggl( I - \frac{x\otimes x}{|x|^2} \biggr)
\biggr)
\end{equation}
and thus
\[
\mbox{the eigenvalues of } D^2\phi(x) = \alpha2^\alpha|x|^{-\alpha
-2}
\cdot %
\cases{(\alpha+1), &\quad with multiplicity $1$,
\vspace*{3pt}\cr
-1, &\quad with multiplicity $d-1$.}
\]
The differential inequality for $u$ at $z=\bar z(y)$ yields
\begin{eqnarray*}
- 1 \leq{\mathcal{P}}^+_{\lambda(z),\Lambda} \bigl(D^2 \phi(z - y) \bigr)
& =& \alpha2^\alpha|z-y|^{-\alpha-2} \bigl( (d-1)\Lambda-(\alpha+1)
\lambda(z) \bigr)
\\
& \leq&\alpha(\alpha+ 1) 2^\alpha|z - y|^{-(\alpha+ 2)} \biggl(
\frac{\mu}{2} - \lambda(z) \biggr).
\end{eqnarray*}
Using that $2^\alpha|z-y|^{-(\alpha+2)} \geq\frac14$ and $\alpha
(\alpha+1) \geq8/\mu$ and rearranging this, we get
\begin{equation}
\lambda(z) < \mu.
\end{equation}
We conclude that
\begin{equation}
\label{Acapt} A:= \bar z(B_{r/2}) \subseteq \bigl\{x\in
B_1 \dvtx  \lambda(x) < \mu \bigr\}.
\end{equation}

Since $u$ is semiconcave and $|D^2 \varphi|$ is bounded in $\mathbb
R^d\setminus B_{r/2}$, we see that $u$ is $C^{1,1}$ on $A$. In
particular, $u$ is differentiable at each point of $A$ and, by
Rademacher's theorem and the Lebesgue differentiation theorem, twice
differentiable at Lebesgue almost every point of $A$. For each $y\in B_{r/2}$,
\[
Du \bigl(\bar z(y) \bigr) = D\phi \bigl(\bar z(y) - y \bigr) = -
\alpha2^\alpha \bigl|\bar z(y) - y\bigr|^{-(\alpha+ 2)} \bigl(\bar z(y) -
y \bigr).
\]
Hence,
\begin{equation}
\label{enDu} \bigl\llvert Du \bigl(\bar z(y) \bigr) \bigr\rrvert =
\alpha2^\alpha \bigl\llvert \bar z(y) - y \bigr\rrvert
^{-(\alpha+ 1)}
\end{equation}
and substituting this into the previous line yields
\[
Du \bigl(\bar z(y) \bigr) =- \bigl(\alpha2^\alpha
\bigr)^{-1/(\alpha+1)} \bigl\llvert Du \bigl(\bar z(y) \bigr) \bigr\rrvert
^{(\alpha+ 2)/(\alpha+1)} \bigl(\bar z(y) - y \bigr).
\]
Solving this for $y$, we find that $\bar z$ has Lipschitz inverse
$\bar y\dvtx A \to B_{r/2}$ given by
\[
\bar y(z):= z + \bigl(\alpha2^\alpha \bigr)^{1/(\alpha+1)} \bigl
\llvert Du(z) \bigr\rrvert ^{-(\alpha+ 2)/(\alpha+ 1)} Du(z).
\]
Since $Du(\bar z(y)) = D\phi(\bar z(y) - y) \neq0$ at each
$y\in B_{r/2}$ on $A$, it is clear that $Du \neq0$ on $A$ and thus
$\bar y$ is differentiable at each $z \in A$ at which $u$ is twice
differentiable; at such $z\in A$, we compute
\begin{eqnarray*}
D\bar y(z) &=& I + \bigl( \alpha2^\alpha \bigr)^{1/(\alpha+1)} \bigl
\llvert Du(z) \bigr\rrvert ^{-(\alpha+2)/(\alpha+1)}
\nonumber\\[-8pt]\\[-8pt]
&&\times{} \biggl( D^2u(z) \biggl( I - \frac{\alpha+2}{\alpha+1} \frac
{Du(z)}{|Du(z)|} \otimes\frac{Du(z)}{|Du(z)|} \biggr) \biggr).
\end{eqnarray*}
Using~(\ref{enDu}) we conclude that, at almost every $z\in A$,
\begin{equation}
\label{eDbary} \bigl\llvert D\bar y(z) \bigr\rrvert \leq C \bigl( 1 +
\bigl( \alpha 2^\alpha \bigr)^{-1} \bigl\llvert \bar y(z) -
z \bigr\rrvert ^{\alpha+2} \bigl\llvert D^2u(z) \bigr\rrvert
\bigr),
\end{equation}
where $C>0$ is a constant depending only on $d$.

It remains to estimate $|D^2u|$ on $A$. Using~(\ref{eHessphi}) and
that $\phi$ touches $u$ from below on $A$, we have, at each $z\in A$
at which $u$ is twice differentiable,
\begin{equation}
\label{hessdn} D^2u(z) \geq D^2\phi \bigl(z-\bar y(z) \bigr) \geq- \alpha2^\alpha\bigl|z - \bar y(z)\bigr|^{-(\alpha+ 2)} I.
\end{equation}
On the other hand, the differential inequality gives
\[
-1 \leq{\mathcal{P}}^+_{\lambda(z),\Lambda} \bigl( D^2u(z) \bigr) =
\Lambda\tr \bigl( D^2u(z) \bigr)_- - \lambda(z) \tr
\bigl(D^2u(z) \bigr)_+.
\]
A rearrangement of the later yields, in light of~(\ref{hessdn}),
\[
\tr \bigl(D^2u(z) \bigr)_+ \leq \bigl( d\Lambda\alpha2^\alpha\bigl|z-
\bar y(z)\bigr|^{-(\alpha+2)} + 1 \bigr) \lambda^{-1}(z)
\]
and from this we deduce that
\begin{equation}
\label{hessup} D^2 u(z) \leq C \lambda(z)^{-1} \bigl(
\alpha2^\alpha\bigl|z -\bar y(z)\bigr|^{-(\alpha+ 2)} + 1 \bigr) I,
\end{equation}
where here and in the rest of the proof $C> 0$ depends only on $d$ and
$\Lambda$. Combining~(\ref{hessdn}) and~(\ref{hessup}), we obtain, at
each point $z\in A$ at which $u$ is twice differentiable,
\begin{equation}
\label{hessest} \bigl\llvert D^2u(z) \bigr\rrvert \leq C
\lambda^{-1}(z) \bigl( \alpha2^\alpha \bigl|z-\bar y(z)\bigr|^{-(\alpha+2)} + 1 \bigr).
\end{equation}
Inserting this into~(\ref{eDbary}) and using that $\alpha\geq1$,
$|z - \bar y(z)| \leq2$ and $\lambda^{-1}(z) \geq\mu^{-1} \geq
2$, we at last deduce
\begin{equation}
\label{eDbary2} \bigl\llvert D\bar y(z) \bigr\rrvert \leq C \bigl( 1 +
\bigl( \alpha 2^\alpha \bigr)^{-1} \bigl\llvert \bar y(z) -
z \bigr\rrvert ^{\alpha+2} \bigr) \lambda^{-1}(z) + C \leq C
\lambda^{-1}(z)
\end{equation}
at Lebesgue almost every point $z\in A$.

We finally apply the area formula, using~(\ref{Acapt}),~(\ref
{eDbary2}) and the hypothesis of the lemma to conclude that
\begin{eqnarray*}
\bigl( 2^{-d} |B_1| \bigr) r^d&=&
|B_{r/2}| = \int_A \bigl|\det D\bar y(x)\bigr| \,dx
\\
&\leq& C \int_A \lambda^{-d}(x) \,dx \leq C \int
_{B_1 \cap
\{ \lambda< \mu\}} \lambda^{-d}(x) \,dx \leq C\delta
r^d.
\end{eqnarray*}
We get a contradiction if $\delta> 0$ is sufficiently small, depending
on $d$ and $\Lambda$.
\end{pf}

%
\begin{remark}
\label{rbarrier}
By an easy modification of the above proof, we also obtain an
additional estimate for the barrier. Given $h > 0$, we can modify the
choice of $\delta, \beta> 0$ and also select an $r' \in(r, 1)$
depending only on $d$, $\Lambda$, $\varepsilon$, $\mu$, $r$ and $h$
(but not $\beta$) such that the supersolution $u$ satisfies $u \geq
\beta- h$ on $B_{r'} \setminus B_r$. Later we use this observation to
verify the Dirichlet boundary condition for the limit function in the
proof of homogenization.
\end{remark}

The next lemma, which is inspired by~\cite{S}, Lemma~2.1, follows from
another application of the area formula and a (much easier) variation
of the above argument. It asserts that, if a supersolution can be
touched from below by sufficiently many translations of a fixed
parabola, then the $L^d$ norm of $\lambda^{-1}$ on the set of points
at which the touching occurs cannot be too small.

%
\begin{lem}
\label{lcontact}
Let $u \in\LSC(B_1)$ satisfy
\[
{\mathcal{P}}^+_{\lambda(x),\Lambda} \bigl(D^2 u \bigr) \geq-1 \qquad\mbox{in } B_1.
\]
Suppose that $a\geq1$ and $V \subseteq\mathbb{R}^d$ such that, for
each $y\in V$, the infimum over $B_1$ of the map $z \mapsto u(z) +
\frac{a}{2} |z - y|^2$ is attained. Let $W \subseteq B_1$ denote the
union over $y\in V$ of the subset of $B_1$ at which this map attains
its minimum. Then there exists a constant $\delta> 0$, depending only
on $d$ and $\Lambda$, such that
\[
\int_W \lambda^{-d}(x) \,dx \geq\delta|V|.
\]
\end{lem}

\begin{pf}
By replacing $u$ by $u+\alpha|x|^2$ and letting $\alpha\to0$, we may
suppose that, for some small $\eta>0$, $W \subseteq B_{1-\eta}$ and,
for every $y\in B_1$,
\begin{equation}
\label{strictmin} \min_{ z\in\partial B_{1-\eta}} \biggl(u(z) + \frac{a}2|z-y|^2
\biggr) > \inf_{z\in W} \biggl(u(z) + \frac{a}2|z-y|^2
\biggr).
\end{equation}
As in the proofs of Proposition~\ref{pabp} and Lemma~\ref{lbarrier}, we may assume that $u$ is semiconcave by infimal
convolution approximation. Indeed, due to~(\ref{strictmin}), the set
$V$ is essentially unchanged by the infimal convolution approximation,
while the set $W$ is unchanged or possibly smaller.

Select a Lebesgue-measurable function $\bar z\dvtx V\to B_1$ such that
the map $z \mapsto u(z) + \frac{a}{2} |z - y|^2$ attains its infimum
in $B_1$ at $z=\bar z(y)$. The function $u$ is $C^{1,1}$ on
$A:=\bar z(V)$ and $\bar z$ has a Lipschitz inverse
$\bar y$ given by
\[
\bar y(z)\dvtx  = z + \frac{1}a Du(z).
\]
By Rademacher's theorem and the Lebesgue differentiation theorem, $u$
is twice~differentiable at almost every point of $z\in A$ and, at such
$z$, we have \mbox{$D^2u(z) \geq-aI$},
\begin{equation}
D\bar y(z) = I + \frac{1}a D^2u(z) \geq0
\end{equation}
as well as
\begin{eqnarray*}
-\lambda(z) \tr \bigl(D\bar y(z) \bigr) &=& {\mathcal {P}}^+_{\lambda(z),\Lambda}
\bigl(D\bar y(z) \bigr) = {\mathcal{P}}^+_{\lambda(z),\Lambda} \biggl(I +
\frac{1}aD^2u(z) \biggr)
\\
&\geq&\frac{1}a {\mathcal{P}}^+_{\lambda
(z),\Lambda} \bigl(D^2u(z)
\bigr) + {\mathcal{P}}^-_{\lambda
(z),\Lambda} (I) \geq-\frac{1}{a} - \Lambda d
\end{eqnarray*}
and, therefore,
\begin{equation}
0 \leq D\bar y(z) \leq\frac{1}{\lambda(z)} \biggl(\frac{1}a+ \Lambda
d \biggr).
\end{equation}
An application of the area formula for Lipschitz functions gives
\[
|V| = \int_{A} \bigl| \det D \bar y(x)\bigr| \,dx \leq \biggl(
\frac{1}a + \Lambda d \biggr)^{d} \int_{A}
\lambda^{-d}(x) \,dx
\]
from which we obtain the lemma, using that $a\geq1$ and $A \subseteq W$.
\end{pf}

We now give the proof of the decay of oscillation.
\begin{pf*}{{Proof of Proposition~\ref{poscillation}}}
Now suppose that $0 < \mu< \frac12$ and
\begin{equation}
\label{mu} \int_{B_1 \cap\{ \lambda< \mu\}} \lambda^{-d}(x) \,dx <
\delta:= \frac{1}{6 N_d} |B_{1/6}| \min \bigl\{ 8^{-d}
\delta_1, 32^{-d} \delta_2 \bigr\},
\end{equation}
where $\delta_1$ is from Lemma \ref{lbarrier}, $\delta_2$ is from
Lemma \ref{lcontact} and $N_d$ is the constant from the Besicovitch
covering theorem in dimension $d$.

We first make a reduction by noticing that, to obtain the proposition
for $\tau:= (1-1/k)$, it suffices to consider $v \in C(B_1)$ which satisfies
\begin{equation}
\label{eoscalpha1} \mathcal P^+_{\lambda(x), \Lambda} \bigl(D^2 v \bigr) \geq-1
\quad\mbox{and}\quad\mathcal P^-_{\lambda(x),\Lambda} \bigl(D^2 v \bigr)
\leq1 \qquad\mbox{in } B_1
\end{equation}
and
\begin{equation}
\label{eoscineq} \mathop{\operatorname{osc}}_{B_1} v \leq1 +
\mathop{\operatorname{osc}}_{B_{1/8}} v
\end{equation}
and to show that $\operatorname{osc}_{B_1} v < k$. Indeed, suppose $u$
satisfies the hypotheses of the proposition and
\[
\mathop{\operatorname{osc}}_{B_{1/8}} u > \bigl(1 - k^{-1} \bigr)
\mathop{\operatorname{osc}}_{B_1} u + \alpha.
\]
Then $\alpha< k^{-1} \operatorname{osc}_{B_1} u$, and so if we set
$v:=ku/ \operatorname{osc}_{B_1} u$, then we see that $v$ satisfies
(\ref{eoscalpha1}), (\ref{eoscineq}) and $\operatorname{osc}_{B_1}
v = k$.

Define, for every $\kappa>0$,
\[
A_\kappa:= \biggl\{ x\in B_{1} \dvtx  \exists y \in
B_1, v(x) + \frac{\kappa}{2}|x - y|^2 = \inf
_{z\in B_1} \biggl( v(z) + \frac
{\kappa}{2}|z - y|^2
\biggr) \biggr\}.
\]
In other words, $A_\kappa$ is the set of points in $B_{1}$ at which
$v$ can be touched from below by a paraboloid with Hessian $-\kappa I$
and vertex in $B_1$. To prove the desired estimate on~$v$, it is enough
to show that
\begin{equation}
\label{Akapwts} |A_\kappa\cap B_{1/6}| \geq
\tfrac23|B_{1/6}|
\end{equation}
for some $\kappa> 0$ depending only on~$d$,~$\Lambda$ and~$\mu$.
Indeed, if we could show this, then using that $-v$ satisfies the same
hypotheses as $v$ and applying~(\ref{Akapwts}) to both functions, we
find a point $x\in B_{1/6}$ which can be touched from above and below
by parabolas with opening $\kappa$. That is, we could conclude that
there exist $x\in B_{1/6}$ and $y_1, y_2 \in B_1$ such that, for all
$z\in B_1$,
\[
v(x) + \frac{\kappa}{2}|x - y_1|^2 -
\frac{\kappa}{2}|z - y_1|^2 \leq v(z) \leq v(x) -
\frac{\kappa}{2}|x - y_2|^2 + \frac{\kappa
}{2}|z -
y_2|^2.
\]
This implies that $|v(x) - v(z)| \leq2\kappa$ for all $z\in B_1$, and
thus $\operatorname{osc}_{B_1} v \leq4 \kappa$, which is the desired
estimate.

In order to obtain~(\ref{Akapwts}) for some $\kappa> 0$ depending on
the appropriate quantities, we observe first that~(\ref{eoscineq})
implies that $A_{576} \cap B_{1/6} \neq\varnothing$. Indeed, $(\frac{1}6-\frac{1}8)^2 = 1/576$ and so $v$ can be touched from below in
$B_{1/6}$ by the parabola $-576|x-y|^2$, where $y \in\overline
B_{1/8}$ is such that $v(y)=\min_{\overline B_{1/8}} v$. We then
repeatedly apply Lemma~\ref{liter} below to obtain the desired result
for $\kappa=576 \cdot\theta^n$, where $n:= \lceil|B_1| / \eta
\rceil$ and $\theta,\eta>0$ are given in the statement of the
lemma. The proof of Proposition~\ref{poscillation} is now complete,
pending the verification of Lemma~\ref{liter}.
\end{pf*}

The following lemma contains the measure theoretic information needed
to conclude the proof of Proposition~\ref{poscillation}. In the
classical regularity theory, this step traditionally relies on the
Calder\'on--Zygmund cube decomposition (as in the proof of~(4.12)
in~\cite{CC}). Since we did not immediately see how to adapt it, and
for the sake of variety, we instead use an alternative tool: the
Besicovitch covering theorem.\footnote{Luis Silvestre has since
pointed out to us that the Calder\'on--Zygmund decomposition argument
in~\cite{CC} may indeed be suitably modified to prove Lemma~\ref
{liter} and that the best choice is the Vitali covering theorem, which
can be used in a similar yet simpler way than the Besicovitch covering
theorem.} The argument also relies in a crucial way on Lemmas~\ref
{lbarrier} and~\ref{lcontact}.

%
\begin{lem} \label{liter}
Let $\mu$, $v$ and $A_\kappa$ be as in the proof of Proposition~\ref
{poscillation}. There exist constant~$\theta>1$ and $\eta> 0$,
depending only on~$d$,~$\Lambda$ and~$\mu$, such that if $\kappa\geq
1$, $A_\kappa\cap B_{1/6} \neq\varnothing$ and $|A_\kappa\cap B_{1/6}
| < \frac{2}{3}|B_{1/6}|$, then $|A_{\theta\kappa}\cap B_{1} | \geq
|A_\kappa\cap B_{1}| + \eta$.
\end{lem}
\begin{pf}
Consider the collection~$\mathcal{B}$ of balls~$B_r(x) \subseteq B_1$
such that $B_{r/2}(x) \subseteq B_1 \setminus A_\kappa$ and $\partial
B_{r/2}(x) \cap A_\kappa\neq\varnothing$. Note that since $A_\kappa
\cap B_{1/6} \neq\varnothing$ and $A_\kappa$ is closed, every point
of~$B_{1/6}\setminus A_\kappa$ is the center of some ball in~$\mathcal
B$. According to the Besicovitch covering theorem, we may select a
countable subcollection $\{ B_{r_k}(x_k) \}_{k\in\mathbb{N}}
\subseteq\mathcal{B}$ that covers $B_{1/6} \setminus A_\kappa$ and
such that each point $x \in B_1$ belongs to at most $N_d$ balls.

We say that the ball $B_{r_k}(x_k)$ is \textit{good} if
\[
\frac{1}{|B_{r_k}(x)|}\int_{B_{r_k}(x_k) \cap\{ \lambda< \mu\}} \lambda^{-d} (x) \,dx<
\min \bigl\{ 8^{-d} \delta_1, 32^{-d}
\delta_2 \bigr\}
\]
and set $G:= \{ k\in\mathbb{N} \dvtx   B_{r_k}(x_k)$ is
good$\}$, where $\delta_1, \delta_2>0$ are as in the proof of
Proposition~\ref{poscillation}. We claim that at least half of the
Lebesgue measure of $B_{1/6}\setminus A_\kappa$ consists of points
which belong to good balls, that is,
\begin{equation}
\label{goodballz} \biggl\llvert \bigcup_{k\in G}
B_{r_k}(x_k) \biggr\rrvert > \frac{1}2 \llvert
B_{1/6} \setminus A_\kappa\rrvert \geq\frac{1}6|B_{1/6}|.
\end{equation}
Indeed, if~(\ref{goodballz}) were false, then $\sum_{k\notin G}
|B_{r_k}| \geq\frac{1}2\llvert B_{1/6} \setminus A_\kappa\rrvert  \geq
\frac{1}6 |B_{1/6}|$ and so
\begin{eqnarray*}
\int_{B_1 \cap\{ \lambda< \mu\}} \lambda^{-d}(x) \,dx &\geq&
\frac
{1}{N_d} \sum_{k\notin G} \int
_{B_{r_k}(x_k) \cap\{ \lambda< \mu
\}} \lambda^{-d}(x) \,dx
\\
&\geq&\frac{1}{N_d}\min \bigl\{ 8^{-d} \delta_1,
32^{-d} \delta_2 \bigr\} \sum_{k\notin G}
|B_{r_k}|
\\
& \geq&\frac{1}{6N_d} \min |B_{1/6}| \bigl\{
8^{-d} \delta_1, 32^{-d} \delta_2 \bigr
\},
\end{eqnarray*}
which contradicts~(\ref{mu}). Therefore, in light of the Besicovitch
covering, it is enough to show that $|B_{r_k/2}(x_k) \cap A_{\theta
\kappa}| \geq\eta|B_{r_k}(x_k)|$ for each good ball $B_{r_k}(x_k)$
and some constants $\theta, \eta> 0$ depending only on $d$, $\Lambda
$ and $\mu$.

Fix a good ball $B_r(x):= B_{r_k}(x_k)$ and choose $z_1 \in\partial
B_{r/2}(x) \cap A_\kappa$. By the definition of $A_\kappa$, we can
touch $z_1$ by a paraboloid of Hessian $-\kappa I$: there exists
$y_1\in B_1$ such that
\begin{equation}
\label{vminzz} v(z_1) + \frac{\kappa}{2} |z_1 -
y_1|^2 = \inf_{z \in B_1} \biggl( v(z) +
\frac{\kappa}{2} |z - y_1|^2 \biggr).
\end{equation}
We argue that, by making this paraboloid steeper and wiggling the
vertex, we may touch the function $v$ at a positive proportion of
points inside of $B_r(x)$. A key role is played by Lemma~\ref
{lbarrier}, which keeps the touching points near the center and away
from the boundary of $B_r(x)$ as well as by Lemma~\ref{lcontact},
which ensures that we can touch a positive proportion of points by
wiggling the vertex of the paraboloid.

Using that $B_r(x)$ is good and applying (a properly scaled) Lemma~\ref
{lbarrier}, there exists $\beta> 1$, depending only on~$d$,~$\Lambda
$ and~$\mu$, such that the solution $w$ of the Dirichlet problem
\[
\cases{ {\mathcal{P}}^+_{\lambda(x), \Lambda} \bigl(D^2 w \bigr) = - 1, &
\quad in $B_r(x) \setminus\overline B_{r/8}(x)$,
\vspace*{3pt}\cr
w = 0,
&\quad on $\partial B_r(x)$,
\vspace*{3pt}\cr
w = \beta r^2, &\quad
on $\partial B_{r/8}(x)$,}
\]
satisfies $w > 0$ in $\overline B_{r/2}(x) \setminus B_{r/8}(x)$.
Clearly, $w \leq\beta r^2$ in $B_r\setminus B_{r/8}(x)$ by the maximum
principle.
Observe that the function
\[
\varphi(z):= (d\Lambda\kappa+ 2) w - \frac{\kappa}{2} |z -
y_1|^2,
\]
satisfies
\begin{equation}
{\mathcal{P}}^+_{\lambda(x), \Lambda} \bigl(D^2 \varphi \bigr) \leq-2
\qquad\mbox{in } B_r(x) \setminus B_{r/8}(x).
\end{equation}

The comparison principle implies that the map $z \mapsto v(z) - \varphi
(z)$ attains its infimum in $B_r(x) \setminus B_{r/8}(x)$ at some point
$z=z_2 \in\partial B_r(x) \cup\partial B_{r/8}(x)$. Notice, however,
that it is impossible that $z_2 \in\partial B_r(x)$, since (\ref
{vminzz}), $w\equiv0$ on $\partial B_r(x)$ and $w(z_1)>0$ imply that
\begin{eqnarray*}
&& v(z_1) - \varphi(z_1)
\\
&&\qquad = v(z_1) +
\frac{\kappa}{2} |z_1 - y_1|^2 - (d\Lambda
\kappa+ 2) w(z_1) < \inf_{z \in B_1} \biggl( v(z) +
\frac
{\kappa}{2} |z - y_1|^2 \biggr)
\\
&&\qquad \leq \inf_{z\in\partial
B_r(x)} \biggl( v(z) + \frac{\kappa}{2} |z -
y_1|^2 \biggr) = \inf_{z\in\partial B_r(x)} \bigl(
v(z) - \varphi(z) \bigr).
\end{eqnarray*}
Hence, $z_2\in\partial B_{r/8}(x)$ and so, in particular, $\varphi
(z_2) = - \frac{\kappa}{2} |z_2 - y_1|^2 + (d\Lambda\kappa+2)\beta
r^2$. Using that $w> 0$ in $B_{r/2}(x) \setminus B_{r/8}(x)$, we obtain that
\begin{eqnarray*}
&& \inf_{z\in B_{r/2}(x)\setminus B_{r/8}(x)} \biggl( v(z) + \frac\kappa 2|z-y_1|^2
\biggr)
\\
&&\qquad \geq\inf_{z\in B_{r/2}(x)\setminus B_{r/8}(x)} \bigl( v(z) - \varphi(z) \bigr)
\\
&&\qquad = v(z_2) - \varphi(z_2) = v(z_2) + \frac
\kappa2|z_2-y_1|^2 - ( d\Lambda+2/\kappa )
\kappa\beta r^2.
\end{eqnarray*}
Using this together with~(\ref{vminzz}), $z_1\in\partial B_{r/2}(x)$
and $\kappa\geq1$, we obtain
\begin{equation}
\label{vminzz2} \qquad \inf_{z \in B_1} \biggl( v(z) + \frac{\kappa}{2}
|z - y_1|^2 \biggr) \geq v(z_2) + \frac
\kappa2|z_2-y_1|^2 - ( d\Lambda+2 ) \kappa
\beta r^2.
\end{equation}
It follows that, if we set $\gamma:= 16\beta(d\Lambda+2)+1$, then
for every $y_2\in B_{r/8}(x)$, the function
\[
\psi(z):= v(z) + \frac{\kappa}{2} |z - y_1|^2 +
\frac{\gamma\kappa
}{2}|z-y_2|^2
\]
satisfies $\psi(z_2) < \min_{B_1\setminus B_{r/2}(x)} \psi$ and,
therefore, must attain its infimum over $B_1$ somewhere in $B_{r/2}(x)$.

Consider the function $\bar z\dvtx B_{r/8}(x) \to B_1$ given by
$\bar z( y) = (y_1+\gamma y)/(1+\gamma)$ and observe by
completing the square that, for some $a\in\mathbb{R}$,
\[
\frac{\kappa}{2} |z - y_1|^2 + \frac{\gamma\kappa}{2} |z -
y_2|^2 = \frac{(\gamma+1) \kappa}{2} \bigl|z - \bar z(y_2)\bigr|^2 + a \qquad \mbox{for all } z\in\mathbb
R^d.
\]
It follows that the map $z\mapsto v(z) + \frac{1}2(\gamma+1)\kappa|z
- \bar z(y_2)|^2$ attains its infimum in $B_1$ at some point of
$B_{r/2}(x)$. Since $\gamma\geq1$, and thus $\gamma/(\gamma+1) \geq
\frac{1}2$, we deduce that
\begin{equation}
\label{ewigg} \bigl|\bar z(B_{r/8})\bigr| \geq2^{-d}
\bigl|B_{r/8}(x)\bigr|.
\end{equation}

We have succeeded in touching the function $v$ by steepening the
paraboloid and wiggling the vertex. Now an application of Lemma~\ref
{lcontact}, using~(\ref{ewigg}) and that $B_r(x)$ is a good ball,
ensures that we have actually touched a positive proportion of points
in $B_r(x)$. We obtain
\begin{eqnarray*}
2^{-d} \delta_2 \bigl|B_{r/8}(x)\bigr| & \leq&\int
_{B_{r/2}(x) \cap A_{(\gamma
+1) \kappa}} \lambda^{-d}(x) \,dx
\\
& \leq&32^{-d} \delta_2 \bigl|B_r(x)\bigr| +
\mu^{-d} \bigl|B_{r/2}(x) \cap A_{(\gamma+1) \kappa}\bigr|,
\end{eqnarray*}
which implies $|B_{r/2}(x) \cap A_{(\gamma+1)\kappa}| \geq\mu^d
2^{-d}(1 - 2^{-d}) \delta_2 |B_{r/8}(x)|$, as desired.
\end{pf}

\section{Homogenization} \label{HO}
The proof of homogenization follows the approach of~\cite{CSW},
although we have reorganized the argument for clarity and simplicity as
well as to accommodate the modifications required to handle the
nonuniform elliptic case. The strategy relies on an application of the
subadditive ergodic theorem to a certain quantity involving the
obstacle problem. The proof has three steps:
\begin{enumerate}[(3)]
\item[(1)] Identifying $\overline F$: by applying the subadditive
ergodic theorem to the Lebesgue measure of the contact set of a certain
obstacle problem, we build the effective operator $\overline F$.

\item[(2)] Building approximate correctors: with the help of the
effective regularity results, we compare the solutions of the obstacle
problem to the solution of the Dirichlet problem with zero boundary
conditions and show that the latter act as approximate correctors.

\item[(3)] Proving convergence: using the approximate correctors, the
classical perturbed test function method allows us to conclude.
\end{enumerate}

\subsection*{Step one: Identifying $\overline F$ via the obstacle problem}
Following~\cite{CSW}, we introduce, for each bounded Lipschitz
domain~$V\in\mathcal{L}$, the obstacle problem (with the zero
function as the obstacle):
\begin{equation}
\label{obst} \cases{\min \bigl\{ F \bigl(D^2w,y,\omega \bigr),  w
\bigr\} = 0, &\quad in $V$,
\vspace*{3pt}\cr
w = 0, &\quad on $\partial V$.}
\end{equation}
Some important properties of~(\ref{obst}) are reviewed in
\hyperref[OO]{Appendix}. It is well known that~(\ref{obst}) has a unique
viscosity solution, which we denote by $w=w(y,\omega;  V,F)$. We
often write $w=w(y,\omega; V)$ or simply $w=w(y,\omega)$ if we do not
wish to display the dependence on $F$ or $V$.

The set $\mathcal{C}(V,\omega):= \{ y\in V   \dvtx    w(y,\omega; V) =
0 \}$ of points where $w$ touches the obstacle is called the \textit{contact set}. We write $\mathcal{C}(V,\omega; F)$ if we wish to
display the dependence on $F$. The Lebesgue measure of this set is an
important quantity, and we denote it by
\begin{equation}
\label{cntctset} m(V,\omega): = \bigl\llvert \mathcal{C}(V,\omega) \bigr\rrvert.
\end{equation}
We check that $m$ satisfies the hypotheses of the subadditive ergodic
theorem (Proposition~\ref{SET}). First we observe from the
monotonicity of the obstacle problem [see~(\ref{obst-mono})], that for
all $V,W\in\mathcal{L}$ and $\omega\in\Omega$,
\begin{equation}
\label{Cs-mono} V \subseteq W \qquad\mbox{implies that }\mathcal{C}(W,
\omega ) \cap V \subseteq\mathcal{C}(V,\omega).
\end{equation}
Immediate from~(\ref{Cs-mono}) is the subadditivity of~$m$. That is,
for all $V, V_1,\ldots, V_{k} \in\mathcal{L}$ such that $\bigcup_{j=1}^k V_j \subseteq V$, the sets $V_1,\ldots,V_k$ are pairwise
disjoint and $| V \setminus\bigcup_{j=1}^k V_j | = 0$, we have
\begin{equation}
\label{msa} m(V,\omega) \leq\sum_{j=1}^k
m(V_j,\omega).
\end{equation}
According to (F1), $m$ is stationary, that is,
\[
m(V,\tau_y\omega) = m(y+V,\omega)
\]
for every $y\in\mathbb R^d$ and $V\in\mathcal{L}$. We may easily
extend $m$ to $\mathcal{U}_0$ by defining, for every $A \in\mathcal{U}_0$,
\[
\tilde m(A,\omega):= \inf \bigl\{ m(V,\omega) \dvtx  V \in \mathcal{L}\mbox{ and }
A \subseteq V \bigr\}.
\]
This extension agrees with $m$ on $\mathcal{L}$ by~(\ref{Cs-mono})
and it is easy to show that the subadditivity and stationarity
properties are preserved.

We now obtain the following lemma.

%
\begin{lem} \label{obstH}
There exists an event $\Omega_2 \in\mathcal{F}$ of full
probability and a deterministic constant $\bar{m}\in\mathbb{R}$
such that, for every $\omega\in\Omega_2$ and Lipschitz domain $V
\subseteq\mathbb R^d$,
\begin{equation}
\label{obstHe} \lim_{t\to\infty} \frac{1}{t^d} m(tV,\omega) =
\bar m|V|.
\end{equation}
\end{lem}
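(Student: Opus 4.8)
The plan is to apply the subadditive ergodic theorem (Proposition~\ref{SET}) to the process $\tilde\m$. The work has essentially been done already in the paragraphs preceding the statement: we have extended $\m$ to a function $\tilde\m:\O\to\MP$ on bounded sets, and we have observed that $\tilde\m$ is subadditive in the sense required (from~\eqref{msa} and the definition of $\tilde\m$) and stationary (so that $\tilde\m(A,\tau_y\omega)=\tilde\m(y+A,\omega)$, which is the cocycle property $f(A,\tau_y\omega)=f(y+A,\omega)$ demanded in the statement of a subadditive process). What remains to check in order to invoke Proposition~\ref{SET} is that $\tilde\m$ takes values in $\MP$, i.e.\ that $0\le \tilde\m(A,\omega)\le |A|$ for every $A\in\BS$; but $\tilde\m(A,\omega)$ is the infimum over $V\in\L$ containing $A$ of $\m(V,\omega)=|\Cs(V,\omega)|\ge 0$, and taking $V$ to be, say, a ball slightly larger than $A$ and using $\Cs(V,\omega)\cap A\subseteq \Cs(V,\omega)$ together with $|\Cs(V,\omega)\cap A|\le |A|$ and a limiting argument gives $\tilde\m(A,\omega)\le|A|$. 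One should also note measurability of $\omega\mapsto\tilde\m(A,\omega)$, which follows since $\m(V,\cdot)$ is measurable for each fixed $V$ (the solution of the obstacle problem depends measurably on $\omega$, being a locally uniform limit of solutions to approximating problems, and the contact set measure is then measurable) and the infimum defining $\tilde\m$ can be taken over a countable family of $V$'s by approximation.

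Granting these verifications, Proposition~\ref{SET} produces an event $\Cr{obstHO}\in\F$ of full probability and a constant $0\le a\le 1$ such that, for every $\omega\in\Cr{obstHO}$ and every $V\in\L$,
\begin{equation*}
\lim_{t\to\infty}\frac{\tilde\m(tV,\omega)}{|tV|}=a.
\end{equation*}
Setting $\overline\m:=a\,|B_1|^{?}$ — more precisely, rewriting $|tV|=t^\d|V|$, this reads $\lim_{t\to\infty} t^{-\d}\tilde\m(tV,\omega)=a|V|$, so we take $\overline\m:=a$ and obtain exactly~\eqref{obstHe}, since $\tilde\m=\m$ on $\L$ and $tV\in\L$ whenever $V\in\L$. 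The fact that $\overline\m\in\R$ (indeed $\overline\m\in[0,1]$) is automatic.

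The main (and really the only) point requiring care is the passage from the cube-based version of the subadditive ergodic theorem stated in Proposition~\ref{SET} to arbitrary Lipschitz domains — but the excerpt has already asserted (``we recover the general case by an easy approximation argument'') that Proposition~\ref{SET} holds as stated for all $V\in\L$, so for the purposes of this lemma I would simply cite it. If one wanted to be self-contained here, the approximation would go as follows: for $V\in\L$ and $\eta>0$, sandwich $V$ between finite disjoint unions of dyadic cubes $\bigcup Q_i^-\subseteq V\subseteq\bigcup Q_j^+$ with $|\bigcup Q_j^+\setminus\bigcup Q_i^-|<\eta$; apply subadditivity of $\tilde\m$ and the cube case to each side; and let $\eta\to0$ using $0\le\tilde\m(\cdot,\omega)\le|\cdot|$ to control the error, after first intersecting the countably many full-probability events arising from the countable family of dyadic cubes. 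In any case, this is routine, and the honest statement of the proof is: verify $\tilde\m$ is a subadditive process and apply Proposition~\ref{SET}.
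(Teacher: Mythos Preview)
Your approach is exactly the paper's: the paper's proof is the single sentence ``In light of the remarks preceding the statement, the lemma follows from the subadditive ergodic theorem (Proposition~\ref{SET}).'' Your write-up simply unpacks those remarks. One small correction: your argument for $\tilde\m(A,\omega)\le|A|$ is muddled --- the inclusion $\Cs(V,\omega)\cap A\subseteq\Cs(V,\omega)$ and the bound $|\Cs(V,\omega)\cap A|\le|A|$ go the wrong way, since $\tilde\m(A,\omega)$ is an infimum of $|\Cs(V,\omega)|$, not of $|\Cs(V,\omega)\cap A|$; the correct one-line argument is that $\Cs(V,\omega)\subseteq V$ gives $\m(V,\omega)\le|V|$, and then take $V\in\L$ with $A\subseteq V$ and $|V|$ arbitrarily close to $|A|$.
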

\begin{pf}
In light of the remarks preceding the statement, the lemma follows from
the subadditive ergodic theorem (Proposition~\ref{SET}).
\end{pf}

For clarity, we write $\bar m= \bar m(F)$ to display the
dependence of $\bar m$ in Lemma~\ref{obstH} on the nonlinear
operator $F$.

We are now ready to define the effective nonlinearity:
\begin{equation}
\label{Fbar} \overline F(0):= \sup \bigl\{ \alpha\in\mathbb{R} \dvtx  \bar m(F
- \alpha) > 0 \bigr\}.
\end{equation}
We extend this definition to all symmetric matrices in the obvious way.
For each $N\in{\mathbb{S}^d}$, we denote $F_N$ by
\[
F_N(M,y,\omega):= F(M+N,y,\omega)
\]
and then we set, for each $M\in{\mathbb{S}^d}$,
\[
\label{FbarM} \overline F(M):= \overline F_M(0).
\]
To check that $\overline F$ is well defined and finite, we first
observe that, by~(\ref{obstdn}) and~(\ref{obstup}),
\[
\inf_{y\in V} F(0,y,\omega) \geq0 \qquad\mbox{implies that } \mathcal{C}(V,\omega) = V
\]
and
\[
\sup_{y\in V} F(0,y,\omega) < 0 \qquad\mbox{implies that }
\mathcal{C}(V,\omega) = \varnothing.
\]
Using (F3) and the remarks in Section~\ref{PO}, it follows from these that
\begin{equation}
\label{easybnds} \essinf_{\omega\in\Omega} F(M,0,\omega) \leq\overline F(M) \leq
\esssup_{\omega\in\Omega} F(M,0,\omega).
\end{equation}
The monotonicity of the obstacle problem implies that $\alpha\mapsto
\bar m(F-\alpha)$ is a decreasing function, and thus $\bar m(F-\alpha) > 0$ for $\alpha< \overline F(0)$ and $\bar m(F-\alpha) = 0$ for $\alpha> \overline F(0)$.

It is immediate from the comparison principle for the obstacle problem
that, if $F_1$ and $F_2$ are two operators satisfying our hypotheses, then
\begin{equation}
\label{obvmono} \sup_{M\in{\mathbb{S}^d}} \esssup_{\omega\in\Omega} \bigl(
F_1(M,0,\omega) - F_2(M,0,\omega) \bigr)\leq0 \qquad\mbox{implies }\overline F_1 \leq\overline F_2.
\end{equation}
It is even more obvious that adding constants commutes with the
operation \mbox{$F \mapsto\overline F$}. From these facts, a number of
properties of $\overline F$ are immediate, the ones inherited from
uniform properties of $F$. A few of these are summarized in the
following lemma.

%
\begin{lem}\label{Fbarprop}
For every $M,N\in{\mathbb{S}^d}$ such that $M\leq N$, we have
\begin{equation}
\label{Fdegell} 0 \leq\overline F(M) - \overline F(N) \leq\Lambda\tr(N-M).
\end{equation}
Moreover, if $M\mapsto F(M,0,\omega)$ is positively homogeneous of
order one, odd or linear, then $\overline F$ possesses the same property.
\end{lem}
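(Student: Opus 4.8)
The plan is to deduce everything in Lemma~\ref{Fbarprop} from the two monotonicity principles already recorded, namely~\eqref{obvmono} (comparison of operators passes to $\overline F$) and the fact that adding constants commutes with $F \mapsto \overline F$. First I would prove~\eqref{Fdegell}. Fix $M \leq N$ in $\Sy$. By (F2) (equivalently~\eqref{uez} at $y=0$) we have, for every $\omega$ and every symmetric matrix $P$,
\begin{equation*}
\Pu^-_{\lambda(\omega),\Lambda}(P) \leq F(M+P,0,\omega) - F(M,0,\omega) \leq \Pu^+_{\lambda(\omega),\Lambda}(P).
\end{equation*}
Taking $P = N - M \geq 0$ gives $\Pu^+_{\lambda(\omega),\Lambda}(N-M) = -\lambda(\omega)\tr(N-M) \leq 0$ and $\Pu^-_{\lambda(\omega),\Lambda}(N-M) = -\Lambda\tr(N-M)$, so that, for all $\omega$,
\begin{equation*}
-\Lambda \tr(N-M) \leq F_N(Q,0,\omega) - F_M(Q,0,\omega) \leq 0 \qquad \text{for every } Q \in \Sy,
\end{equation*}
where $Q = P - (N-M)$ ranges over all of $\Sy$ as $P$ does. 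In other words, $F_N + \Lambda\tr(N-M)$ dominates $F_M$, which dominates $F_N$, uniformly in $(Q,\omega)$. Applying~\eqref{obvmono} and the constant-shift property to both inequalities yields $\overline F_N(0) \leq \overline F_M(0)$ and $\overline F_M(0) \leq \overline F_N(0) + \Lambda\tr(N-M)$; recalling $\overline F(M) = \overline F_M(0)$, this is exactly~\eqref{Fdegell}.

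For the structural properties, I would argue directly from the defining formula~\eqref{Fbar}–\eqref{FbarM} together with scaling properties of the obstacle problem. If $M \mapsto F(M,0,\omega)$ is \emph{linear} for each $\omega$, then so is $F_N$ in its first slot up to an additive term, and linearity of $M \mapsto \overline F(M)$ follows once we establish positive homogeneity of order one and oddness, since a positively homogeneous, odd function which also respects addition on a suitable dense set is linear; alternatively one checks $\overline F(M+N) = \overline F(M) + \overline F(N)$ directly by comparing obstacle problems, using that $F(M+N+P,0,\omega) = F(M+P,0,\omega) + (\text{linear in }N)$. For \emph{oddness}: if $F(-M,0,\omega) = -F(M,0,\omega)$, observe that the obstacle problem~\eqref{obst} for $F_{-M} - \alpha$ is, after replacing $w$ by $-w$ and using the max/min duality, related to the obstacle problem for $-(F_M + \alpha)$; chasing this through the definition~\eqref{Fbar} gives $\overline F(-M) = -\overline F(M)$. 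For \emph{positive homogeneity of order one}: if $F(tM,0,\omega) = tF(M,0,\omega)$ for $t>0$, then rescaling the domain $V \mapsto t^{1/2}V$ in the obstacle problem turns the operator $F_{tM} - t\alpha$ problem into the $F_M - \alpha$ problem (since $D^2$ scales by $t^{-1}$ under $x \mapsto t^{1/2}x$, matching the factor $t$), whence $\overline \m(F_{tM} - t\alpha) = t^{d/2}\,\overline\m(F_M - \alpha)$ up to the volume normalization in~\eqref{obstHe}, and in particular this quantity is positive exactly when $\overline\m(F_M - \alpha) > 0$; taking the supremum over such $\alpha$ in~\eqref{Fbar} gives $\overline F(tM) = t\,\overline F(M)$.

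The main obstacle I anticipate is the oddness and homogeneity bookkeeping: one must be careful that the transformations (replacing $w \mapsto -w$, rescaling the domain) actually preserve the class of admissible obstacle problems and interact correctly with the $\min$ in~\eqref{obst} and with the sup-definition~\eqref{Fbar} — in particular that the contact set of the transformed problem is the image of the original contact set, so that $\overline\m$ transforms as claimed. A clean way to avoid delicate viscosity-solution manipulations is to note that each of these properties is already obviously possessed by $\overline F$ as a \emph{consequence of}~\eqref{obvmono}: e.g.\ if $F$ is positively homogeneous then for $t>0$ the operators $F_{tM}$ and $t\,F_M$ agree identically (as functions of $(Q,\omega)$, after the constant shift is absorbed), so~\eqref{obvmono} applied in both directions gives $\overline F(tM) = t\,\overline F(M)$ immediately; similarly $F_{-M} = -F_M$ when $F$ is odd forces the corresponding identity for $\overline F$ once one knows $\overline{(-G)} = -\overline G$, which itself follows from the $w \mapsto -w$ symmetry of the obstacle problem and is the one genuinely nontrivial sub-step. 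Linearity is then the conjunction of homogeneity and additivity, the latter being the special case $N$ linear of the argument proving~\eqref{Fdegell} with the $\Lambda\tr(N-M)$ term replaced by the exact linear value $F(M+N,0,\omega) - F(M,0,\omega)$.
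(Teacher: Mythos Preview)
Your argument for~\eqref{Fdegell} is correct and is exactly the paper's approach: use (F2) to get a uniform pointwise inequality between $F_M$ and $F_N$, then invoke~\eqref{obvmono} and the constant-shift property. No issues there.

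The structural properties are where your proposal has genuine gaps. Both of your routes to positive homogeneity are flawed. First, the spatial rescaling $V\mapsto t^{1/2}V$ does not do what you claim: under $y\mapsto t^{1/2}y$ the operator becomes $F(\cdot,t^{1/2}y,\omega)$, which is a \emph{different} (non-stationary!) operator, so you are not comparing two obstacle problems for the same $F$. Second, your ``clean'' claim that ``$F_{tM}$ and $tF_M$ agree identically (after a constant shift)'' is simply false: $F_{tM}(Q)=F(tM+Q)$ while $tF_M(Q)=tF(M+Q)$, and positive homogeneity of $F$ gives $F(tM+Q)=tF(M+Q/t)$, not $tF(M+Q)$. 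No additive constant repairs this. The correct and elementary scaling is in the \emph{dependent} variable: if $w$ solves the obstacle problem for $F_{tM}-t\alpha$ in $V$, then $\tilde w:=w/t$ solves the obstacle problem for $F_M-\alpha$ in the \emph{same} $V$ (because $F(tM+D^2w,y)=tF(M+D^2\tilde w,y)$ and $\{w=0\}=\{\tilde w=0\}$), so the contact sets coincide, $\overline m(F_{tM}-t\alpha)=\overline m(F_M-\alpha)$, and $\overline F(tM)=t\overline F(M)$ follows from~\eqref{Fbar}.

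For oddness you correctly isolate the nontrivial step (a $w\mapsto -w$ symmetry relating the obstacle problem for $G$ to one for $-G$), but note that this swap turns the $\min$ into a $\max$, i.e.\ lands you in the \emph{obstacle-from-above} problem; you then need to observe that the analogue of~\eqref{Fbar} built from that dual problem defines the same $\overline F$. You have not carried this out. Finally, your sketch for linearity via ``additivity as in the proof of~\eqref{Fdegell}'' does not go through: when $F$ is linear, $F_{M+N}(Q,y,\omega)-F_M(Q,y,\omega)=F(N,y,\omega)$ depends on $(y,\omega)$ and is \emph{not} a constant, so the constant-shift property does not apply. The paper, like you, derives linearity from homogeneity and oddness; one still needs an additional observation (e.g.\ that $\overline F$ arises as an infimum/supremum of linear operators once homogeneity is in hand) to close this, and neither you nor the paper spells it out.
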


\begin{pf}
Each of the properties are proved using the comments before the
statement of the proposition. To prove~(\ref{Fdegell}), we simply
observe that, according to (F1), for all $(Y,y,\omega)\in{\mathbb
{S}^d}\times\mathbb R^d\times\Omega$,
\begin{equation}
F(M+Y,y,\omega) \leq F(N+Y,y,\omega) + \Lambda\tr(N-M)
\end{equation}
and then apply~(\ref{obvmono}). It is obvious that $\overline F$
inherits the properties of positive homogeneity and oddness from $F$,
and linearity follows from these.
\end{pf}

Observe that~(\ref{Fdegell}) asserts that $\overline F$ is degenerate
elliptic. If $F$ were uniformly elliptic, that is, $\lambda^{-1} \in
L^\infty(\Omega)$, then it follows from an argument nearly identical
to the one for~(\ref{Fdegell}) that $\overline F$ is uniformly
elliptic. For more general $\lambda^{-1} \in L^d(\Omega)$, the
operator $\overline F$ is uniformly elliptic as well, but the proof is
more complicated. We postpone it until the next subsection, since it is
convenient to deduce it as a consequence of Proposition~\ref
{parahomo}, which we prove first.

We next show that, in large domains, the contact set has nearly
constant density.

%
\begin{lem} \label{lspread}
For every~$\omega\in\Omega_2$ and $V,W \in\mathcal{L}$ with
$\overline W \subseteq V$,
\begin{equation}
\label{elimsubs} \lim_{t\to\infty} \frac{ |\mathcal{C}(tV,\omega) \cap tW|}{|tW|} = \bar m.
\end{equation}
\end{lem}
\begin{pf}
Let $U:= V\setminus W\in\mathcal{L}$ and fix $\omega\in\Omega_2$.
Observe that~(\ref{Cs-mono}) gives
\begin{equation}
\label{espup} \limsup_{t\to\infty} \frac{|\mathcal{C}(tV,\omega) \cap
tW|}{|tW|} \leq\lim
_{t\to\infty} \frac{|\mathcal{C}(tW,\omega
)|}{|tW|} = \bar m
\end{equation}
and, by the same argument,
\[
\limsup_{t\to\infty} \frac{|\mathcal{C}(tV,\omega) \cap
tU|}{|tU|} \leq\bar m.
\]
Therefore,
%
\begin{eqnarray}
\label{espdn}
\qquad\liminf_{t\to\infty} \frac{|\mathcal{C}(tV,\omega) \cap
tW|}{|tW|} &=&\liminf
_{t\to\infty} \frac{|\mathcal{C}(tV,\omega)
\cap tV|-|\mathcal{C}(tV,\omega) \cap tU|}{|tW|}
\nonumber\\[-8pt]\\[-8pt]
&\geq& \biggl( \frac{|V|}{|W|} - \frac{|U|}{|W|} \biggr) \bar m =
\bar m.\nonumber
\end{eqnarray}
Combining~(\ref{espup}) and~(\ref{espdn}) yields~(\ref{elimsubs}).
\end{pf}

\subsection*{Step two: Building approximate correctors}
The next step in the proof of Theorem~\ref{H} is to show that, in the
macroscopic limit, the obstacle problem controls the solution of the
Dirichlet problem
\begin{equation}
\label{free} \cases{F \bigl(D^2v,y,\omega \bigr) = 0, &\quad in
$V$,
\vspace*{3pt}\cr
v = 0, &\quad on $\partial V$.}
\end{equation}
As before, $V\in\mathcal{L}$ is a bounded Lipschitz domain and we
write $v=v(y,\omega; V,F)$.

The following proposition is the focus of this subsection.

%
\begin{prop} \label{parahomo}
There exists an event $\Omega_3 \in\mathcal{F}$ of full
probability such that, for every $\omega\in\Omega_3$, $M\in
{\mathbb{S}^d}$ and $V\in\mathcal{L}$,
\begin{equation}
\label{parahomo-e} \lim_{t\to\infty} \frac{1}{t^2} \sup
_{y\in tV} \bigl\llvert v \bigl(y,\omega; tV,F_M-
\overline F(M) \bigr) \bigr\rrvert = 0.
\end{equation}
\end{prop}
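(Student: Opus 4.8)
\textbf{Proof proposal for Proposition~\ref{parahomo}.}

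The plan is to compare the solution $v = v(y,\omega\,;tV, F_M - \overline F(M))$ of the Dirichlet problem with zero obstacle against the solution $w = w(y,\omega\,;tV, F_M - \overline F(M))$ of the obstacle problem~\eqref{obst}, using the crucial fact from Step one (Lemma~\ref{obstH} and Lemma~\ref{l.spread}) that for $\omega\in\Cr{obstHO}$ the contact set of the obstacle problem has \emph{vanishing density} in large domains: by~\eqref{Fbar} and the monotonicity $\alpha\mapsto\overline\m(F_M-\alpha)$, the operator $F_M - \overline F(M)$ has $\overline\m(F_M-\overline F(M)) = 0$, so $\limsup_{t\to\infty} t^{-\d}\m(tV,\omega) = 0$. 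Replacing $M$ by $-M$ and noting that we must also control the subsolution from below, I would apply the same argument to the operator $\widehat F(N,y,\omega) := -F_M(-N,y,\omega) + \overline F(M)$, whose associated $\overline\m$ also vanishes (this requires knowing $\overline{\widehat F}(0) = 0$, which follows since $\overline{(-F)(-\,\cdot\,)} = -\overline F$ by the uniqueness of the obstacle-problem construction, or directly from~\eqref{easybnds}-type bounds). Thus it suffices to prove the one-sided statement: $t^{-2}\sup_{tV}(-v)_+ \to 0$, and then apply it twice.

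Here is the one-sided comparison, which is the heart of the argument and follows~\cite{CSW}. Fix $\omega\in\Cr{obstHO}$ (intersected with the full-probability events already constructed, and we will shrink it once more below). Since $w\le 0$, $w\ge v$ is false in general, but $w$ \emph{is} a supersolution of $F_M - \overline F(M) \le 0$ that vanishes on $\partial(tV)$ and satisfies $w\le 0$; comparing with $v$ (which solves the equation with zero boundary data) gives $v\ge w$ in $tV$, so it suffices to bound $\inf_{tV} w$ from below, i.e., to show $t^{-2}\sup_{tV}(-w)\to 0$. Suppose not: then along a subsequence $t_j\to\infty$ there is $y_j\in t_jV$ with $-w(y_j) \ge c\, t_j^2$ for some $c>0$. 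Rescale: set $w_j(x) := t_j^{-2} w(t_j x, \omega)$, which solves $\Pu^-_{\lambda(t_jx,\omega),\Lambda}(D^2 w_j)\le C$ in $V$ (and the obstacle-problem inequality $w_j\le 0$ with contact set of measure $|\Cs(t_jV,\omega)\cap t_jV|/t_j^\d \to \overline\m\,|V| = 0$). The point is that at the rescaled minimum-type point $\bar y_j := y_j/t_j \in V$ the function $w_j$ dips down to $-c$ below zero; now touch $w_j$ from below by paraboloids $z\mapsto -\tfrac{a}{2}|z-y|^2 + \text{const}$ with vertices $y$ ranging over a ball of fixed radius (centered appropriately so that, by the quadratic-growth bound $\osc w_j \le C$ coming from local a priori bounds for the extremal operators, the touching point stays inside $V$) --- this is exactly the situation of Lemma~\ref{l.contact}, which forces $\int_{W_j}\lambda^{-\d}(t_jx,\omega)\,dx \ge \delta |V_0|$ for a fixed ball $V_0$, where $W_j$ is the union of touching points. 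On the other hand, every touching point lies in the contact set $\Cs(t_jV,\omega)$ (scaled by $t_j^{-1}$): indeed where $w_j < 0$ strictly it solves $F_M - \overline F(M) = 0$, and the paraboloid touching forces, via the extremal inequality and Lemma~\ref{l.contact}'s bookkeeping, concentration on $\{\lambda < \mu\}$; combining with $|\Cs(t_jV,\omega)|/t_j^\d\to 0$ and $t_j^{-\d}\int_{t_jV\cap\{\lambda<\mu\}}\lambda^{-\d}(x,\omega)\,dx \to \E[\lambda^{-\d}\indc_{\{\lambda<\mu\}}]$ (the ergodic limit~\eqref{erghelp}, which can be made \emph{small} by choosing $\mu$ small thanks to (F4)), we reach a contradiction. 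The quantitative chain is: fixed mass of $\lambda^{-\d}$ on touching points $\le$ (mass on $\{\lambda\ge\mu\}$ part, controlled by $\mu^{-\d}\times$ measure of contact set $\to 0$) $+$ (mass on $\{\lambda<\mu\}$ part, $\le \E[\lambda^{-\d}\indc_{\{\lambda<\mu\}}] + o(1)$, small); both terms can be driven below $\delta|V_0|$, the contradiction.

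I would organize the write-up as: (i) reduce~\eqref{parahomo-e} to the two one-sided estimates $t^{-2}\sup_{tV}(-v)_+\to 0$ and $t^{-2}\sup_{tV}(v)_+\to 0$, handling the second by the reflection $F\mapsto \widehat F$; (ii) establish $v\ge w$ by comparison; (iii) reduce $t^{-2}\sup_{tV}(-w)_+\to 0$ to a contradiction with the vanishing density of the contact set, via rescaling and Lemma~\ref{l.contact}; (iv) collect the ergodic inputs: $t^{-\d}\m(tV,\omega)\to 0$ from Lemma~\ref{obstH} applied to $F_M - \overline F(M)$, and~\eqref{erghelp} to control the $\{\lambda<\mu\}$ contribution. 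The main obstacle, and the step deserving the most care, is (iii): making rigorous that the paraboloid-touching points genuinely lie in (a scaled copy of) the contact set and that the measure of touching points is bounded below independently of $j$, which requires the correct choice of paraboloid opening $a$, vertex ball radius, and base point --- all dictated by the a priori $\osc$-bound on $w_j$ on interior balls --- and a careful invocation of Lemma~\ref{l.contact} together with the splitting of $\int\lambda^{-\d}$ over $\{\lambda<\mu\}$ and $\{\lambda\ge\mu\}$. One must also ensure the exceptional null set is chosen uniformly over $M\in\Sy$ and $V\in\L$; this is done by the standard trick of first proving the limit for $M$ in a countable dense subset of $\Sy$ and $V$ in a countable family generating $\L$, then upgrading by the degenerate-ellipticity estimate~\eqref{Fdegell} (continuity of $\overline F$) and monotonicity of the obstacle problem in the domain.
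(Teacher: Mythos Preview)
Your proposal contains a genuine gap rooted in a sign error, and it also misses the key analytic ingredient the paper uses for one of the two directions.

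\textbf{The sign error.} The obstacle problem~\eqref{obst} is $\min\{F(D^2w),w\}=0$, so the solution satisfies $w\geq 0$ everywhere, not $w\leq 0$ as you write. Since $w$ is a subsolution of $F=0$ (by~\eqref{obst-sups}) vanishing on $\partial(tV)$, comparison gives $v\leq w$, not $v\geq w$. Consequently your step~(iii), which assumes $w(y_j)\leq -ct_j^2$ along a subsequence, is vacuous; and the comparison $v\leq w$ gives an \emph{upper} bound on $v$, not a lower one. The paper gets the lower bound $\liminf t^{-2}\inf_{tV} v\geq 0$ by a different route: it applies the ABP inequality (Proposition~\ref{p.abp}) to $u:=w-v$, which satisfies $\Pu^-_{\lambda,\Lambda}(D^2u)\leq K\chi_{\Cs(tV,\omega)}$ by~\eqref{obst-subs}. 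The contact set of the obstacle problem enters through the \emph{right-hand side} of this inequality, and the ABP integral $\fint\lambda^{-\d}\chi_{\Cs}$ is then split over $\{\lambda<1/k\}$ and its complement and driven to zero using $\overline\m=0$ and~\eqref{Okseq}. Your proposed mechanism via Lemma~\ref{l.contact} is different, and the key assertion that ``every touching point lies in the contact set $\Cs(t_jV,\omega)$'' is unjustified and in fact false: at a paraboloid touching point with $w>0$ one simply has $F(D^2w)=0$ there, which yields no contradiction and no membership in $\{w=0\}$.

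\textbf{The missing ingredient for the other direction.} For the upper bound $\limsup t^{-2}\sup_{tV} v\leq 0$ the paper uses $v\leq w$ and then must show $t^{-2}\sup_{tV} w\to 0$. Here $\overline\m>0$ (after the perturbation $F\mapsto F+\alpha$), so by Lemma~\ref{l.spread} the contact set has uniformly positive density in every subball, hence $w$ vanishes somewhere in each ball $B(tx,tr)$. To pass from ``$w$ vanishes nearby'' to ``$w$ is small'' the paper iterates the decay-of-oscillation estimate, Proposition~\ref{p.oscillation}, obtaining $\osc_{B(tx,tr)} t^{-2}w\leq Cr^\gamma$. This step is essential and appears nowhere in your outline; Lemma~\ref{l.contact} alone cannot substitute for it.

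\textbf{Two further points.} First, you assume $\overline\m(F_M-\overline F(M))=0$ directly, but $\overline F(M)$ is the critical value of $\alpha$ in~\eqref{Fbar}, at which $\overline\m$ may be either zero or positive; the paper resolves this by perturbing by $\pm\alpha$ and invoking the continuity~\eqref{freeconte} (itself an ABP consequence) to pass $\alpha\to 0$. Second, your reflection $F\mapsto\widehat F$ would require $\overline{\widehat F}(0)=0$, which does not follow from~\eqref{easybnds} and is not generally true absent odd symmetry of $F$; the paper instead proves the two one-sided bounds by genuinely different arguments (ABP for one, oscillation decay for the other). Your closing remark about uniformity of the null set over $M\in\Sy$ is, however, well taken and more explicit than the paper's presentation.
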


Before we give its proof, we remark that Proposition~\ref{parahomo} is
a special case of Theorem~\ref{H}. We can see this by fixing $U\in
\mathcal{L}$, defining
\[
v^\varepsilon(x,\omega):= \varepsilon^2 v \biggl(
\frac{x}\varepsilon,  \omega;  \frac{1}\varepsilon U, F -
\overline F(0) \biggr)
\]
and then checking that $v^\varepsilon(\cdot,\omega)$ is the unique
solution of the boundary-value problem
\begin{equation}
\label{cell} \cases{ F \biggl(D^2v^\varepsilon,
\dfrac{x}\varepsilon,\omega \biggr) = \overline F(0), &\quad in $U$,
\vspace*{3pt}\cr
v^\varepsilon= 0, &\quad on $\partial U$.}
\end{equation}
The conclusion of Proposition~\ref{parahomo} then asserts that
\begin{equation}
v^\varepsilon\to0 \qquad\mbox{uniformly in } U\qquad\mbox{as } \varepsilon\to0,
\end{equation}
which is consistent with Theorem~\ref{H} since the zero function
$v\equiv0$ is obviously the unique solution
\begin{equation}
\label{dumb} \cases{ \overline F \bigl(D^2v \bigr) = \overline
F(0), &\quad in $U$,
\vspace*{3pt}\cr
v = 0, &\quad on $\partial U$.}
\end{equation}
As we show in the next subsection, Proposition~\ref{parahomo} actually
implies Theorem~\ref{H}. This is because, for large $R> 0$, the
function $\xi(y):= v (y,\omega;  B_R,  F_M-\overline F(M)
)$ is an ``approximate corrector'' in $B_R$ in the sense that it
satisfies the equation
\begin{equation}
F \bigl(M+D^2\xi,y,\omega \bigr) = \overline F(M) \qquad\mbox{in }B_R
\end{equation}
and is ``strictly subquadratic at infinity'' [i.e., satisfies~(\ref
{parahomo-e})]. This is precisely what is needed to implement the
perturbed test function method.

\begin{pf*}{Proof of Proposition~\ref{parahomo}}
According to the ergodic theorem, there exists an event $\Omega_4\in
\mathcal{F}$ of full probability such that, for every
$\omega\in\Omega_4$, $V \in\mathcal{L}$ and rational $q\in\mathbb
{Q}$ with $q>0$,
\begin{equation}
\label{Oksequp} \lim_{t \to\infty} \fint_{tV}
\lambda^{-d}(y,\omega) \,dy = \mathbb{E} \bigl[ \lambda^{-d}
\bigr]
\end{equation}
and
\begin{equation}
\label{Okseq} \lim_{t \to\infty} \fint_{tV}
\lambda^{-d}(y,\omega) \chi_{\{
\lambda< q \}}(y) \,dy = \mathbb{E} \bigl[
\lambda^{-d} \mathbh {1}_{\{ \lambda< q\}} \bigr].
\end{equation}
Note that, according to the ABP inequality (Proposition~\ref{pabp},
properly scaled), for every $\omega\in\Omega_4$,
\begin{equation}
\label{freeconte} \qquad\lim_{\alpha\to0}\limsup_{t\to\infty}
\frac{1}{t^2} \sup_{y\in
tV} \frac{1}{R^2} \bigl\llvert v
(y,\omega; tV,F ) - v (y,\omega; tV,F + \alpha ) \bigr\rrvert = 0.
\end{equation}
We now define $\Omega_3:= \Omega_2 \cap\Omega_4$, where
$\Omega_2$ is given in the statement of Lemma~\ref{obstH}.

We first show that, for all $\omega\in\Omega_3$, $V\in
\mathcal{L}$ and $M\in{\mathbb{S}^d}$
\begin{equation}
\label{freeupe} \liminf_{t\to\infty} \frac{1}{t^2} \inf
_{y\in tV} v \bigl(y,\omega; tV,F_M-\overline F(M)
\bigr) \geq0.
\end{equation}
We may assume that $M=0$ by replacing~$F$ with~$F_{-M}$ and that
$\overline F(0)=0$ by replacing~$F$ by~$F-\overline F(0)$. By~(\ref
{freeconte}), we may also suppose that $\bar m(F) = 0$ by
considering $F - \alpha$ for $\alpha> 0$ and then sending $\alpha\to
0$. Set
\[
K:= \esssup_{\omega\in\Omega} \bigl(F(0,0,\omega) \bigr)_+ = \esssup_{\omega
\in\Omega}
\sup_{y\in\mathbb R^d} \bigl(F(0,y,\omega) \bigr)_+.
\]
According to~(\ref{parahomo-e}) and~(\ref{obst-subs}), for every
$t>0$, the function $u:= w(\cdot,\omega; tV,F) - v(\cdot,\omega;tV,F)$ satisfies
\[
{\mathcal{P}}^-_{\lambda(y,\omega),\Lambda} \bigl(D^2u \bigr) \leq K \chi
_{\mathcal{C}(tV,\omega)} \qquad\mbox{in } tV
\]
and $u = 0$ on $\partial(tV)$. Using that $w\geq0$, the ABP
inequality (Proposition~\ref{pabp}, properly scaled) and~(\ref
{obstHe}), we obtain
%
\begin{eqnarray}
\label{edfgh}
&& \limsup_{t \to\infty} \frac{1}{t^2} \sup_{y\in tV} -\,v(y,\omega tV,F)\nonumber
\\
&&\qquad \leq \limsup_{t \to\infty}
\frac{1}{t^2} \sup_{y\in tV} u(y)
\\
&&\qquad \leq C K \limsup_{t\to\infty} \biggl( \fint_{tV} \lambda
^{-d}(y,\omega) \chi_{\mathcal{C}(tV,\omega)}(y) \,dy \biggr)^{1/d}.\nonumber
\end{eqnarray}
To estimate the integral on the right, we observe that, for each $k\in
\mathbb{N}$,
\[
\int_{tV} \lambda^{-d}(y,\omega)
\chi_{\mathcal{C}(tV,\omega)}(y) \,dy \leq \biggl( k^d\bigl|m(tV,\omega)\bigr| + \int
_{tV} \lambda ^{-d}(y,\omega) \chi_{ \{ \lambda< 1 /k \}}(y)
\,dy \biggr).
\]
Divide this by $|tV|$ and pass to the limit $t\to\infty$ using~(\ref
{Okseq}) to obtain
\begin{equation}
\qquad \limsup_{t\to\infty} \fint_{tV} \lambda^{-d}(y,
\omega) \chi _{\mathcal{C}(tV,\omega)}(y) \,dy \leq k^d\bar m(F) + \mathbb
{E} \bigl[ \lambda^{-d} \mathbh{1}_{\{ \lambda< 1/k\}} \bigr].
\end{equation}
Since $\bar m(F) = 0$, we may send $k \to\infty$ and combine the
resulting expression with~(\ref{edfgh}) to obtain~(\ref{freeupe}).

To complete the proof, we show that, for every $\omega\in\Omega_3$,
\begin{equation}
\label{freedowne} \limsup_{t\to\infty} \frac{1}{t^2} \sup
_{y\in tV} v \bigl(y,\omega; tV,F_M -\overline F(M)
\bigr) \leq0.
\end{equation}
As above, we may suppose that $M=0$ and $\overline F(0) = 0$. We may
also assume that $\bar m(F) > 0$, by considering $F + \alpha$ for
$\alpha> 0$ and then sending $\alpha\to0$, using~(\ref{freeconte}).
Since $v \leq w$, it suffices for~(\ref{freedowne}) to show that
\begin{equation}
\label{parabse} \limsup_{t\to\infty} \frac{1}{t^2}\sup
_{y\in tV} w (y,\omega; tV,F ) = 0.
\end{equation}
Furthermore, by the monotonicity of the obstacle problem it suffices to
show that
\begin{equation}
\label{parabse2} \limsup_{t\to\infty} \frac{1}{t^2}\sup
_{y\in tB_R} w (y,\omega; tB_{2R},F ) = 0,
\end{equation}
where $R> 1$ is large enough that $V \subseteq B_{R}$. Fix $r > 0$ and
observe that, by~Lemma~\ref{lspread} and an easy covering argument
using $\bar m(F) > 0$, there exists $T > 0$ sufficiently large
such that, for every $t\geq T$ and $x\in B_R$, the function $w(\cdot,
\omega;  tB_{2R}, F)$ vanishes at some point of $B(tx,tr)$. We
therefore have, for every $t\geq T$ and $x\in B_R$,
\begin{equation}
\label{cntcr} \frac{1}{t^2}\bigl|w(tx, \omega; tB_{2R}; F)\bigr| \leq
\mathop{\operatorname{osc}}_{B(tx,tr)} \frac{1}{t^2} w(\cdot, \omega;
tB_{2R}; F).
\end{equation}
We prove~(\ref{parabse2}) by showing that the lim-sup of the
right-hand side of~(\ref{cntcr}), as $t\to\infty$, is~$o(1)$
as~$r\to0$. For this, we rely on Proposition~\ref{poscillation}.

Notice that~(\ref{obst-subs}),~(\ref{Oksequp}) and the ABP inequality
(Proposition~\ref{pabp}) yield, for $t>0$ sufficiently large, the bound
\begin{equation}
\label{ctnctup} \frac{1}{t^2} \sup_{tB_{2R}} \bigl\llvert w(
\cdot,\omega;  tB_{2R},F) \bigr\rrvert \leq C K R^2,
\end{equation}
where $C$ depends only on $d$, $\Lambda$ and $\mathbb{E} [
\lambda^{-d}  ]$. Select $\mu> 0$ such that
\[
\mathbb{E} \bigl[ \lambda^{-d} \mathbh{1}_{\{\lambda< \mu\}
} \bigr] <
4^{-d} \delta,
\]
where $\delta> 0$, $\delta\in\mathbb{Q}$ is as in Proposition~\ref
{poscillation}. By~(\ref{Okseq}), and making $T > 0$ larger, if
necessary, we have that for all $t\geq T$, $r<r'<R$ and $x\in B_R$,
\begin{equation}
\label{covdn} \fint_{B_{tr'}(tx) } \lambda^{-d}(y,\omega)
\chi_{ \{ \lambda< \mu
\}} \,dy < \delta.
\end{equation}
To see this, consider a finite covering $\{ B_s(x_i) \}$ of $B_R$ by
balls of radius $s=2^{-k}R$ for some $k\in\mathbb{N}$. According
to~(\ref{Okseq}), for sufficiently large $t$, the average of $\lambda
^{-d}(y,\omega)\chi_{ \{ \lambda< \mu\}}$ in each of the balls
$B_{2s}(x_i)$ will be less than $4^{-d} \delta$. But every ball
$B_{r'}(x)$, with $s/2 \leq r' \leq s$ and $x\in B_R$, is contained in
one of the balls $B_{2s}(x_i)$. Since $4r' \geq s$, this yields
\[
\fint_{B_{tr'}(tx) } \lambda^{-d}(y,\omega)\chi_{ \{ \lambda< \mu
\}} \,dy
\leq4^{d} \fint_{B_{2s}(x_i)} \lambda^{-d}(y,\omega)\chi
_{ \{ \lambda< \mu\}} \,dy < \delta.
\]
Repeating this covering argument for $k=0,1,2,\ldots, \lceil\log
_2(R/r)  \rceil$ and making $T>0$ larger, if necessary, we
obtain~(\ref{covdn}) for every $t\geq T$, $r\leq r' \leq R$ and $x\in B_R$.

Iterating Proposition~\ref{poscillation}, using~(\ref
{ctnctup}),~(\ref{covdn}) as well as~(\ref{obst-sups}) and~(\ref
{obst-subs}), we obtain, for every $x\in B_R$ and $t\geq T$,
\begin{equation}
\mathop{\operatorname{osc}}_{B(tx,tr)} \frac{1}{t^2} w(\cdot, \omega;
tB_{2R}; F) \leq Cr^{\gamma}
\end{equation}
for some constants $\gamma>0$ and $C>0$ which may depend
on~$d$,~$\Lambda$, $\mathbb{E} [\lambda^{-d} ]$,~$\mu
$, $K$~and~$R$, but do not depend on $r$ or $T$. Combining this
with~(\ref{cntcr}) and sending $t\to\infty$ and then $r \to0$, we
obtain (\ref{parabse}), and thus the proposition.
\end{pf*}

We conclude the second part by showing that $\overline F$ is uniformly
elliptic and giving an estimate of its ellipticity. The proof is based
on Lemma~\ref{lcontact} and Proposition~\ref{parahomo}.

%
\begin{prop} \label{ellipest}
There exists $c>0$, depending only on $d$ and $\Lambda$, such that
$\overline F$ is uniformly elliptic with constants $\lambda_0:= c
\mathbb{E}  [ \lambda^{-d}  ]^{-1}$ and $\Lambda$, that
is, for all $M,N\in{\mathbb{S}^d}$,
\begin{equation}
\label{unifell} {\mathcal{P}}^-_{\lambda_0,\Lambda}(M-N) \leq\overline F(M) -
\overline F(N) \leq{\mathcal{P}}^+_{\lambda_0,\Lambda} (M-N).
\end{equation}
\end{prop}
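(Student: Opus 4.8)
The plan is to show that $\overline F$ satisfies the lower Pucci bound, that is, for $M \geq N$,
\begin{equation*}
\overline F(N) - \overline F(M) \leq \Lambda \tr(M-N),
\end{equation*}
which together with the already-established upper bound $0 \leq \overline F(N) - \overline F(M)$ from Lemma~\ref{Fbarprop} and the analogous statements for $M \leq N$ gives~\eqref{unifell}; the content is entirely in producing the constant $\lb = c\,\E[\lambda^{-d}]^{-1}$ in the \emph{strict} ellipticity direction, i.e. showing $\overline F(M) - \overline F(N) \geq \lb \tr(M-N)$ when $M \geq N$. By the homogeneity of the construction (replacing $F$ by $F_N$) it suffices to treat $N = 0$, and by subtracting constants we may assume $\overline F(0) = 0$; we must then show that $\overline F(M) \geq \lb \tr(M)$ for $M \geq 0$, equivalently that $\overline m(F_M - \lb\tr(M)) > 0$, i.e. the contact set of the obstacle problem for $F_M$ shifted down by $\lb\tr(M)$ has positive asymptotic density.

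The mechanism I would use: fix $M \geq 0$ and set $a := \tr(M)$ (assume $a > 0$, else nothing to prove). Consider, for $t$ large, the solution $v_t := v(\cdot,\omega\,;tV,F)$ of the Dirichlet problem~\eqref{free} on $tV$ with $\overline F(0)=0$; by Proposition~\ref{parahomo} we have $t^{-2}\sup_{tV}|v_t| \to 0$. The function $\tilde v_t(y) := v_t(y) - \tfrac12 y\cdot M y$ then satisfies $F(M + D^2\tilde v_t, y,\omega) = F(D^2 v_t,y,\omega) = 0$ by definition of $F_M$ — wait, more carefully: $D^2\tilde v_t = D^2 v_t - M$, so $F_M(D^2\tilde v_t,y,\omega) = F(D^2 v_t,y,\omega) = 0$. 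Now $\tilde v_t$ is a subsolution-type object that is quadratically negative: since $M \geq 0$, $\tfrac12 y\cdot M y \geq 0$, and on a large ball $B_{tr}(ty_0) \subseteq tV$ the term $\tfrac12 y\cdot M y$ dominates, so $\tilde v_t$ is touched from below by a paraboloid with Hessian $-M$ (or rather $\geq -\kappa I$ for $\kappa = |M|$) at many points — a positive proportion of points of $tV$ away from the boundary, because $\tilde v_t$ is a strictly concave-dominated perturbation of something that is $o(t^2)$. Applying Lemma~\ref{l.contact} (properly scaled) to $\tilde v_t$, which satisfies $\Pu^+_{\lambda(y,\omega),\Lambda}(D^2\tilde v_t) \geq -C$ for $C$ depending on $|M|,\Lambda$, the set $W_t$ of touching points satisfies $\int_{W_t}\lambda^{-d} \geq \delta|V_t|$ where $V_t$ is the vertex set, which has measure $\gtrsim |tV|$. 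On the other hand the ABP-type / ergodic bound gives $t^{-d}\int_{W_t}\lambda^{-d}(y,\omega)\,dy \to$ something controlled by $\E[\lambda^{-d}]$ times $t^{-d}|W_t|$ plus small corrections — this forces $|W_t| \gtrsim t^d \E[\lambda^{-d}]^{-1}\delta$, i.e. the contact set of the relevant obstacle problem has density bounded below by $c\,\E[\lambda^{-d}]^{-1}$, which is exactly the lower bound for $\overline m$ we need, yielding $\overline F(M) \geq \lb a$.

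To make the connection to $\overline m$ precise I would argue as in Step one: the contact set $\Cs$ of the obstacle problem for $F_M - \lb a$ contains, asymptotically, the touching points of $\tilde v_t$ just produced, because where $\tilde v_t$ is touched from below by a paraboloid of bounded opening and is itself $o(t^2)$ while $w \geq v$ forces $w$ near zero there (using Lemma~\ref{l.spread} / the near-constant density of the contact set and the oscillation estimate Proposition~\ref{p.oscillation} to transfer smallness of $v_t$ to smallness of $w$ on balls) — this is the same transfer mechanism used in the proof of Proposition~\ref{parahomo}. The main obstacle, I expect, is precisely this bookkeeping: correctly scaling Lemma~\ref{l.contact} to radius $t$, verifying that the vertex set $V_t$ for the paraboloid touchings really does have measure comparable to $|tV|$ (this uses that $\tilde v_t + \tfrac{\kappa}{2}|\cdot - y|^2$ attains an interior minimum for $y$ ranging over a set of proportional measure, which follows from $\sup|v_t| = o(t^2)$ together with the strict positivity of $\tfrac12 y\cdot My$ near the boundary after rescaling), and then cleanly passing from "$\int_{W_t}\lambda^{-d}$ is both $\geq \delta|V_t|$ and $\leq (\E[\lambda^{-d}]+o(1))|W_t|$" to the quantitative lower density bound via Wiener's ergodic theorem~\eqref{Oksequp}, being careful that $W_t$ is a genuine Borel set and that the ergodic averaging applies. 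The upper ellipticity bound $\overline F(M)-\overline F(N)\leq \Lambda\tr(M-N)$ for $M\geq N$ is already in Lemma~\ref{Fbarprop}, so only one direction requires work.
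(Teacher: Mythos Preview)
Your proposal has a genuine gap at the decisive step. From Lemma~\ref{l.contact} you obtain $\int_{W_t}\lambda^{-d} \geq \delta|V_t|$ (with $|V_t|\sim |tV|$), and you then want to combine this with an inequality of the form $\int_{W_t}\lambda^{-d} \leq (\E[\lambda^{-d}]+o(1))|W_t|$ coming from Wiener's ergodic theorem, in order to conclude $|W_t| \gtrsim \E[\lambda^{-d}]^{-1}|tV|$. But~\eqref{Oksequp} gives $\fint_{tV}\lambda^{-d}(y,\omega)\,dy \to \E[\lambda^{-d}]$ only for \emph{deterministic} blow-ups $tV$; it says nothing about the average of $\lambda^{-d}$ over the random, $\omega$-dependent touching set $W_t$. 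There is no reason $\fint_{W_t}\lambda^{-d}$ should be close to $\E[\lambda^{-d}]$; on the contrary, $W_t$ may well be correlated with the set where $\lambda$ is small (compare the proof of Lemma~\ref{l.barrier}, where the touching set is \emph{forced} into $\{\lambda<\mu\}$). The only honest bound available is $\int_{W_t}\lambda^{-d} \leq \int_{tV}\lambda^{-d} \approx \E[\lambda^{-d}]|tV|$, which yields no lower bound on $|W_t|$ at all. You flag ``that the ergodic averaging applies'' as a point to be careful about, but in fact it does not apply, and this is where the argument breaks. The subsequent attempt to transfer a lower density of $W_t$ to the obstacle-problem contact set is also unsubstantiated, but already moot.

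The paper's proof avoids both problems and never passes through the contact set of the obstacle problem. It works at scale $\ep=1/t$ on the fixed ball $B_1$, sets
\[
V_\ep(x):=\ep^2 v\Bigl(\tfrac{x}{\ep},\omega;\tfrac{1}{\ep}B_1,F_M-\overline F(M)\Bigr)-\ep^2 v\Bigl(\tfrac{x}{\ep},\omega;\tfrac{1}{\ep}B_1,F_N-\overline F(N)\Bigr)+\tfrac12 x\cdot(M-N)x,
\]
and uses Proposition~\ref{parahomo} to see $V_\ep\to \tfrac12 x\cdot(M-N)x$ uniformly. The key idea you are missing is to exploit the \emph{anisotropy} of $M-N$: if $a>0$ is its largest eigenvalue and $\xi$ the corresponding unit eigenvector, then an elementary computation on the limiting quadratic shows that, for small $\ep$, any point at which $V_\ep$ is touched from below by a paraboloid of opening $2\beta$ with vertex in $B_{1/3}$ must lie in the \emph{deterministic} thin slab $A=\{x\in B_{1/2}:|x\cdot\xi|<\beta/a\}$, whose measure is $\lesssim\beta/a$. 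Because $A$ is fixed (independent of $\omega$ and $\ep$), the ergodic theorem \emph{does} apply to it: $\int_A\lambda^{-d}(x/\ep,\omega)\,dx\to |A|\,\E[\lambda^{-d}]$. Lemma~\ref{l.contact} (assuming for contradiction $\overline F(M)-\overline F(N)\geq -2\beta$) then gives $\delta|B_{1/3}|\leq C|A|\,\E[\lambda^{-d}]\leq C(\beta/a)\E[\lambda^{-d}]$, which is impossible once $a\geq C\beta\,\E[\lambda^{-d}]$. This yields $\overline F(M)-\overline F(N)\leq -c\,\E[\lambda^{-d}]^{-1}a\,d\leq \Pu^+_{\lambda_0,\Lambda}(M-N)$ directly, with $\lambda_0=c\,\E[\lambda^{-d}]^{-1}$.

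(A minor point: your signs are inverted throughout. For $M\geq N$, degenerate ellipticity~\eqref{Fdegell} already gives $\overline F(M)-\overline F(N)\leq 0$; the content of the proposition is the upper bound $\overline F(M)-\overline F(N)\leq -\lambda_0\tr(M-N)$, not a lower bound.)
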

\begin{pf}
Select $M,N\in{\mathbb{S}^d}$ such that $M\geq N$. Fix $\omega\in
\Omega_3$ and define, for each $\varepsilon> 0$,
\begin{eqnarray*}
V_\varepsilon(x)&:=& \varepsilon^2v \biggl(\frac{x}\varepsilon,
\omega; \frac{1}\varepsilon B_1,F_M-\overline
F(M) \biggr) - \varepsilon ^2v \biggl(\frac{x}\varepsilon,
\omega; \frac{1}\varepsilon B_1,F_N-\overline
F(N) \biggr)
\\
&&{}+ \frac{1}2 x\cdot(M-N)x.
\end{eqnarray*}
It is easy to check that $V$ satisfies the inequality
\begin{equation}
{\mathcal{P}}^+_{\lambda(x/\varepsilon,\omega),\Lambda} \bigl( D^2V_\varepsilon \bigr)
\geq\overline F(M) - \overline F(N) \qquad\mbox{in } B_1.
\end{equation}
According to Proposition~\ref{parahomo},
\begin{equation}
\label{Veplim} V_\varepsilon(x) \rightarrow\tfrac{1}2 x\cdot(M-N)x
\qquad\mbox{as } \varepsilon\to0\qquad\mbox{uniformly in } B_1.
\end{equation}

Suppose that $M-N$ has a largest eigenvalue $a > 0$ with corresponding
normalized eigenvector $\xi\in\mathbb R^d$, $|\xi|=1$ so that
\begin{equation}
\label{bigeig} a \xi\otimes\xi\leq M-N \leq a I.
\end{equation}
Fix $\beta> 0$ and, for each $y\in\mathbb R^d$, denote by $\bar z(y)\in\mathbb R^d$ the (unique) point at which the map $x\mapsto\Phi
(x,y):= \frac{1}2 x\cdot(M-N)x + \beta|x-y|^2$ attains its (strict)
global minimum on $\mathbb R^d$. Note that
\[
\bar z(y) = (M-N + 2\beta I )^{-1} 2\beta y.
\]
In particular, $|\bar z(y)| \leq|y|$ and
\[
\bigl\llvert \xi\cdot\bar z(y) \bigr\rrvert = (a+2\beta)^{-1}
\bigl\llvert \xi\cdot(2\beta y) \bigr\rrvert \leq\frac{2\beta}{2\beta+ a} |y|.
\]

Applying~(\ref{Veplim}), we deduce that, for sufficiently small
$\varepsilon> 0$ and every~$y \in B_{1/3}$, the infimum in $B_1$ of
the map $x \mapsto V_\varepsilon(x) + \beta|x-y|^2$ is attained in
$B_{1/2}$ and any point $z$ at which the minimum is attained satisfies
\begin{equation}
\llvert z \cdot\xi\rrvert \leq\frac{2\beta}{2\beta+a}{\frac{1}3} <
\frac{\beta}{a}.
\end{equation}
Let $A:= \{ x \in B_{1/2}  \dvtx   |x\cdot\xi| < \beta/a \}$ and note
that $|A| \leq\beta/a$. In the case that $\overline F(M) - \overline
F(N) \geq- 2\beta$, we may apply Lemma \ref{lcontact}, using~(\ref
{Oksequp}), to obtain
\[
c \leq|B_{1/2}| \leq C \limsup_{\varepsilon\to0}\int
_{A} \lambda \biggl(\frac{x}\varepsilon,\omega
\biggr)^{-d} \,dx = C|A| \mathbb {E} \bigl[ \lambda^{-d} \bigr]
\leq C \frac{\beta}{a} \mathbb {E} \bigl[ \lambda^{-d} \bigr].
\]
This is impossible if $a \geq C\beta  \mathbb{E}  [ \lambda
^{-d}  ]$. Here, $C>0$ depends only on $d$ and $\Lambda$.

We conclude that $a /\beta\geq\widetilde C:=C \mathbb{E}  [
\lambda^{-d}  ]$ implies that $\overline F(M) - \overline F(N) <
- 2\beta$. Define $\lambda_0:= 2/ d\widetilde C$ and deduce that,
%
for all $M\geq N$,
\[
\overline F(M) - \overline F(N) \leq-2a/\widetilde C = -\lambda_0 a
d= {\mathcal{P}}^+_{\lambda_0,\Lambda}(aI) \leq{\mathcal {P}}^+_{\lambda_0,\Lambda} (M-N).
\]
Recalling~(\ref{Fdegell}), we also have, for every $M\geq N$,
\[
\overline F(M) - \overline F(N) \geq-\Lambda\tr(N-M) = {\mathcal
{P}}^-_{\lambda_0,\Lambda}(M-N).
\]
We have verified~(\ref{unifell}) for all $M,N\in{\mathbb{S}^d}$ with
$M\geq N$.

To remove the latter restriction, fix any $M,N\in{\mathbb{S}^d}$ and write
\begin{eqnarray*}
\overline F(M) - \overline F(N) &=& \overline F(M) - \overline F \bigl(M-(N-M)_-
\bigr)
\\
&&{}+ \overline F \bigl(M-(N-M)_- \bigr) - \overline F \bigl(M-(N-M)_-+(N-M)_+
\bigr)
\end{eqnarray*}
and observe by what we have shown above that
\[
\overline F(M) - \overline F(N) \leq{\mathcal{P}}^+_{\lambda
_0,\Lambda}
\bigl((N-M)_- \bigr) - {\mathcal{P}}^-_{\lambda_0,\Lambda} \bigl((N-M)_+ \bigr) = {
\mathcal{P}}^+_{\lambda_0,\Lambda}(M-N).
\]
This yields the second inequality of~(\ref{unifell}) and arguing again
after interchanging $M$ and $N$ yields the first inequality.
\end{pf}

\subsection*{Step three: Concluding by the perturbed test function method}
By adapting the classical perturbed test function method, first
introduced in the context of periodic homogenization by Evans~\cite
{E2}, we now complete the proof of Theorem~\ref{H}. The test functions
are perturbed by the approximate correctors constructed in
Proposition~\ref{parahomo}. The argument we present here is similar in
spirit to the one given in Section~4 of~\cite{CSW}, although a bit
less complicated.

\begin{pf*}{Proof of Theorem~\ref{H}}
Fix a bounded Lipschitz domain $U\in\mathcal{L}$, $g \in C(\partial
U)$ and an environment $\omega_0 \in\Omega_3$, where the event
$\Omega_3\in\mathcal{F}$ is given in the statement of
Proposition~\ref{parahomo}.

We first argue that, for every $x\in U$,
\begin{equation}
\label{Hup} \widetilde u(x):= \limsup_{\varepsilon\to0}
u^\varepsilon(x,\omega _0) \leq u(x).
\end{equation}

To show (\ref{Hup}), we begin by checking that $\widetilde u(x) \leq
g$ on $\partial U$. By approximation, we may assume that $g \equiv0$
and that $U$ is smooth (and in particular has the exterior ball
condition). By dilation, we may also assume that $F(0,\cdot,\omega)
\leq1$ and that $U \subseteq B_{R/2}(0)$. Given $y \in\partial U$, we
may select $B_r(x) \subseteq\mathbb{R}^d \setminus U$ such that
$\overline B_r(x) \cap\overline U = \{ y \}$. Given $h > 0$, we apply
Lemma~\ref{lbarrier} with the modification in Remark~\ref
{rbarrier}. Using $\omega_0 \in\Omega_3$, we may select
$\beta> 0$ and $r' \in(r, R-r)$ such that the solution $\varphi
^\varepsilon\in C(\overline B_R \setminus B_r)$ of
\[
\cases{ \mathcal{P}^-_{\lambda( x/\varepsilon,\omega_0),\Lambda} \bigl(D^2
\varphi^\varepsilon \bigr) = 1, &\quad in $B_R \setminus
B_r$,
\vspace*{3pt}\cr
\varphi^\varepsilon= \beta,&\quad on $\partial
B_R$,
\vspace*{3pt}\cr
\varphi^\varepsilon= 0, &\quad on $\partial
B_r$,}
\]
satisfies $\limsup_{\varepsilon\to0} \varphi^\varepsilon\leq h$ in
$V \cap B_{r'}(x)$. Since $U \subseteq B_R(x)$ and $u^\varepsilon\leq
0$ on $\partial U$, the comparison principle implies that
$u^\varepsilon\leq\varphi^\varepsilon$. It follows that
\[
\limsup_{\varepsilon\to0} \sup_{V \cap B_{r' - r}(y)} u^\varepsilon (
\cdot, \omega_0) \leq h.
\]
Since $h > 0$ was arbitrary, we conclude that $\widetilde u \leq g$ on
$\partial U$.

By the comparison principle, to prove~(\ref{Hup}) it suffices to check
that the function $\widetilde u(x):=\limsup_{\varepsilon\to0}
u^\varepsilon(x,\omega_0)$ satisfies, in the viscosity sense,
\begin{equation}
\label{tiluss} F \bigl(D^2\widetilde u \bigr) \leq0 \qquad\mbox{in }U.
\end{equation}
To verify~(\ref{tiluss}), we select a smooth test function $\phi\in
C^2(U)$ and a point $x_0\in U$ such that
\[
x \mapsto ( \widetilde u - \phi ) (x) \qquad\mbox{has a strict local maximum at }
x = x_0.
\]
We must show that $\overline F(D^2\phi(x_0)) \leq0$. Set $M:= D^2\phi
(x_0)$ and suppose on the contrary that $\theta:= \overline F(M) > 0$.

Since the local maximum of $\widetilde u-\phi$ at $x_0$ is strict,
there exists $r_0>0$ such that $B_{r_0}(x_0) \subseteq U$ and, for
every $0 < r \leq r_0$,
\begin{equation}
\label{stroom} (\widetilde u - \phi ) (x_0) > \sup
_{\partial B_r(x_0)} (\widetilde u - \phi ).
\end{equation}
We next introduce the perturbed test function
\[
\phi^\varepsilon(x):= \phi(x) + \varepsilon^2 v \biggl(
\frac{x}\varepsilon, \omega_0;  \frac{1}\varepsilon
B_{r_0}(x_0), F_M - \overline F(M) \biggr).
\]
We claim that, in some neighborhood of $x_0$, $\phi^\varepsilon$ is a
strict supersolution of the oscillatory equation at microscopic scale
$\varepsilon$. More precisely, we will argue that, for some suitably
small $0 < s < r_0$ to be selected below (and which may depend on~$\phi$),
\begin{equation}
\label{itpfm} F \biggl( D^2\phi^\varepsilon,
\frac{x}\varepsilon, \omega_0 \biggr) \geq\frac{1}2
\theta\qquad\mbox{in } B_s(x_0).
\end{equation}
To check~(\ref{itpfm}), we select a smooth test function $\psi\in
C^2(B_s(x_0))$ and a point $x_1 \in B_s(x_0)$ such that
\[
x \mapsto \bigl( \phi^\varepsilon- \psi \bigr) (x) \qquad\mbox{has a local
minimum at } x = x_1.
\]
Using the definition of $\phi^\varepsilon$ and rescaling, we have
%
\begin{eqnarray}
y &\mapsto& v \biggl( y,\omega_0; \frac{1}\varepsilon
B_{r_0}(x_0), F_M - \overline F(M) \biggr) -
\frac{1}{\varepsilon^2} \bigl(\psi (\varepsilon y) - \phi(\varepsilon y) \bigr)\nonumber
\\
\eqntext{\mbox{has a local minimum at } y = \dfrac{x_1}{\varepsilon}.}
\end{eqnarray}
Using the equation for $v$, we obtain
\[
F \biggl( M + D^2\psi(x_1) - D^2
\phi(x_1),  \frac{x_1}{\varepsilon
},  \omega_0 \biggr) -
\overline F(M) \geq0.
\]
Since $\phi\in C^2$, we may make $|M-D^2\phi(x_1)| = |D^2\phi
(x_0)-D^2\phi(x_1)|$ as small as we like by taking $s > 0$ small
enough. Thus, in light of (F2), we may fix $s> 0$ so that
\[
\biggl\llvert F \biggl( M + D^2\psi(x_1) -
D^2\phi(x_1),  \frac
{x_1}{\varepsilon},  \omega_0
\biggr) - F \biggl( D^2\psi(x_1), \frac{x_1}{\varepsilon},
\omega_0 \biggr) \biggr\rrvert \leq\frac{1}2\theta.
\]
The previous two inset inequalities and $\theta= \overline F(M)$ yield
\begin{equation}
F \biggl(D^2\psi(x_1), \frac{x_1}\varepsilon,
\omega_0 \biggr) \geq \frac{1}2\theta.
\end{equation}
This completes the proof of~(\ref{itpfm}).

An application of the comparison principle now yields
\begin{eqnarray}
\label{cpacntr} u^\varepsilon(x_0,\omega_0) -
\phi^\varepsilon(x_0) &\leq&\sup_{B_{s}(x_0)} \bigl(
u^\varepsilon(\cdot,\omega_0) - \phi ^\varepsilon \bigr)
\nonumber\\[-8pt]\\[-8pt]
& =&
\sup_{\partial B_{s}(x_0)} \bigl( u^\varepsilon(\cdot,\omega_0)
- \phi^\varepsilon \bigr).\nonumber
\end{eqnarray}
Taking the limsup of both sides of~(\ref{cpacntr}) as $\varepsilon\to
0$ and applying Proposition~\ref{parahomo}, we obtain
\[
\widetilde u(x_0) - \phi(x_0) \leq\sup
_{\partial B_s(x_0)} ( \widetilde u - \phi ).
\]
This contradicts~(\ref{stroom}) and completes the proof that
$\overline F(M) \leq0$, and hence of~(\ref{tiluss}), and hence
of~(\ref{Hup}).

It remains to show that, for every $x\in U$,
\[
\liminf_{\varepsilon\to0} u^\varepsilon(x,\omega_0) \geq
u(x).
\]
This is obtained by mimicking the argument above with very obvious
modifications. We omit the details.
\end{pf*}

\section{Breakdown of homogenization and regularity for $p<d$}\label{CE}
In this section, we show that the condition that the $d$th moment of
$\lambda^{-1}$ is finite is sharp for both the homogenization and
regularity results. We remark that the example we construct shows that
the exponent $p=d$ is sharp with respect to the general class of (fully
nonlinear) operators, but not with respect to the subclass of linear
operators. One interpretation of the reason for this difference is that
some nonlinear equations correspond to stochastic optimal control
problems, and the controller is under no obligation to select a
stationary control. A variant of our construction leads to a linear
counterexample for all $p<1$, which was already discovered in~\cite
{GZ} (see also~\cite{BD}) using a similar trap model. The range $1\leq
p <d$ thus remains open in the linear case; we believe that $p=1$ is
the critical exponent.

For each $p<d$, we construct a stationary-ergodic random environment
$(\Omega,\mathcal{F},\mathbb{P},\tau)$ and stationary random field
$\lambda\dvtx \mathbb R^d\times\Omega\to(0,1]$ such that
\begin{equation}
\label{momcex} \mathbb{E} \bigl[ \lambda^{-p} \bigr] < +\infty,
\end{equation}
but for which homogenization fails for the equation
\begin{equation}
{\mathcal{P}}^-_{\lambda ( x/\varepsilon,\omega),1} \bigl(D^2u^\varepsilon \bigr) =
1.
\end{equation}
To show the breakdown of homogenization, we check that the solution
$u^\varepsilon$ of the Dirichlet problem
\begin{equation}
\label{counterdp} \cases{{\mathcal{P}}^-_{\lambda ( x/,\omega),1} \bigl(D^2u^\varepsilon
\bigr) = 1, &\quad in $B_1$,
\vspace*{3pt}\cr
u^\varepsilon= 0, &\quad on $
\partial B_1$,}
\end{equation}
satisfies $\lim_{\varepsilon\to0} u^\varepsilon(0,\omega) =
+\infty$ almost surely. We conclude that there is no ``effective'' ABP
inequality, in the limit $\varepsilon\to0$, and hence no effective
regularity or effective equation. The random field $\lambda\dvtx \mathbb
R^d\times\Omega\to(0,1]$ we construct has a finite range of
dependence, so even this strongest possible mixing assumption cannot
save homogenization for a general nonlinear operator without a bounded
$d$th moment of ellipticity.

The idea underlying the construction of $\lambda$ is to build spatial
``traps'' where, from the probabilistic perspective, the corresponding
controlled diffusion process becomes stuck for long periods of time,
resulting in subdiffusive behavior on large scales. We fix $0 < \alpha
<1$ small, take $0 < \lambda_* < 1/2d$ to be selected below and
choose, for each $k\in\mathbb{N}$, a random arrangement $P_k(\omega
)\subseteq\mathbb R^d$ of points (also specified below). We construct
the random field $\lambda$ in such a way that $0 < \lambda\leq
\lambda_*$ almost surely and $\lambda(y,\omega)\leq\lambda_k:= 1/
(k^{1+\alpha} \log^3 (2+k))$ in each ball of radius $1$ with center
in $P_k(\omega)$. To be more precise, for each $k\in\mathbb{N}$ we
select a (deterministic) continuous function $\theta_{k}$ on $\mathbb
R^d$ which is at most $\lambda_k$ in $B_1$, takes the value $\lambda
_*$ in $\mathbb R^d\setminus B_2$ and satisfies $\lambda_k \leq\theta
_{k}(y)$ and $\theta_k(y) \leq\lambda_*$. We then set
\[
\lambda(y,\omega): = \inf_{k\in\mathbb{N}} \inf_{x\in P_k(\omega
)}
\theta_k(y-x).
\]
We may also easily arrange that the family $\{ \theta_k \}_{k\in
\mathbb{N}}$ is equicontinuous.

We take the point configurations $P_k$ to be independent Poisson point
processes (cf.~\cite{DV}), with intensities depending on $k$ such
that the expected number of points of $P_k \cap V$ is equal to $a|V|
k^{-1-d}$, where $a>0$ is a parameter independent of $k$ which we also
choose below. Since the series $\sum_{k=1}^\infty k^{-1-d}$ converges,
it follows that the number of points of $\bigcup_{k=1}^\infty P_k$ is
almost surely locally finite by the Borel--Cantelli lemma, and this
implies that $\lambda(0,\omega) > 0$ almost surely. In fact, for all
$p<d/ (1+ \alpha)$,
\begin{eqnarray*}
\mathbb{E} \bigl[ \lambda^{-p} \bigr] &\leq& 1 + Ca \sum
_{k=1}^\infty\lambda_k^{-(1+\alpha) p}
k^{-1-d}
\\
&=& 1 + Ca\sum_{k=1}^\infty
k^{-1-d+(1+\alpha) p} \log^{3p} (2+k) < \infty.
\end{eqnarray*}
The stationarity of the Poisson point processes implies that $\lambda$
is a stationary function, and it is clear that $\lambda(\cdot,\omega
)$ is uniformly continuous (almost surely in $\omega$) since the
family $\{ \theta_k \}_{k\in\mathbb{N}}$ is equicontinuous.

Let us see how we can increase the frequency of ``traps'' (i.e., regions
in which~$\lambda$ is small) by taking~$a>0$ large. For each fixed $t
> 1+a \log t |V|$, we see that
%
\begin{eqnarray}\label{percprob}
&& \mathbb{P} \bigl[ P_k \cap t (\log t)^{1/d}
V \neq\varnothing \mbox{ for some } k \geq t \bigr]
\nonumber\\[-8pt]\\[-8pt]
&&\qquad \geq 1 - \prod_{k=\lceil t \rceil
}^\infty \bigl( 1 - a
t^d \log t |V| k^{-1-d} \bigr) \geq1 - \exp \bigl(-a\log t |V|\bigr).\nonumber
\end{eqnarray}
Choose the constant $a>0$ large enough that $a|B_1| > d+1$, so that
\[
\mathbb{P} \bigl[ P_k \cap t (\log t)^{1/d} V \neq
\varnothing \mbox{ for some } k \geq t \bigr] \geq1 - t^{-(d+1)}.
\]
By covering $B_{t^2}$ with $Ct^d$ balls of radius $\frac{1}6 t(\log
t)^{1/d}$, we deduce that
\[
\mathbb{P} \bigl[ \mbox{for all } x\in B_{t^2}, \dist(x,P_k)
< \tfrac{1}3 t (\log t)^{1/d} \mbox{ for some } k \geq t \bigr]
\geq 1- Ct^{-1}.
\]
By using Borel--Cantelli along the sequence $t_j=2^{j}$, it follows that
\begin{equation}
\label{eBC} \qquad\mathbb{P} \bigl[ \exists s > 1\mbox{ s.t. } \forall t > s, x\in
B_{t^2}, \exists k \geq t \mbox{ s.t. } \dist(x,P_k) < t (
\log t)^{1/d} \bigr] = 1.
\end{equation}

The previous line says that we will have sufficiently many traps to
work with. We next measure the local effect of one trap.

%
\begin{lem} \label{lcounterbend}
Fix $\omega\in\Omega$, and suppose that $k\geq t >10$, $\dist
(0,P_k(\omega)) < t (\log t)^{1/d}$, $R(t):= 10t (\log t)^{1/d}$ and
$v\in C(B_R)$ satisfies
\[
\label{ederp1} \cases{{\mathcal{P}}^-_{\lambda ( x,\omega ),1} \bigl(D^2v
\bigr) \geq1, &\quad in $B_{R(t)}$,
\vspace*{3pt}\cr
v \geq\ell,&\quad on $\partial
B_{R(t)}$,}
\]
where $\ell$ is an affine function. If $0 < \lambda_* < 1/2d$ is
chosen small enough (depending on $\alpha$), then there exists $c,q>0$
depending on $d$ and $\lambda_*$, but not on $t$, such that
\begin{equation}
\label{eCEbump} v(0) \geq\ell(0) + cR(t)^2 \bigl(\log(t)
\bigr)^q
\end{equation}
for a constant $c>0$ depending only on $d$ and a lower bound for
$\alpha$.
\end{lem}
\begin{pf}
We may assume with loss of generality that $\ell=0$. The goal is to
find an explicit subsolution, taking advantage of the trap near the
origin and the fact that the ellipticity is larger than $\lambda
_*^{-1}$. Set $\beta:= 1-2d\lambda_* >0$, $a = (1-\beta)/2 =
d\lambda_* >0$ and
\[
\phi(x):= - \bigl( a + |x|^2 \bigr)^{\beta/2}.
\]
The Hessian of $\phi$ is given by
\[
D^2\phi(x) = \beta \bigl( a +|x|^2 \bigr)^{\beta/2-2}
\biggl( \bigl((1-\beta)|x|^2-a \bigr) \frac{x\otimes x}{|x|^2} - \bigl(a +
|x|^2 \bigr) \biggl(I- \frac{x\otimes x}{|x|^2} \biggr) \biggr).
\]
For $\mu\leq\lambda_*$, we find that
%
\begin{eqnarray}
{\mathcal{P}}^-_{\mu,1} \bigl(D^2\phi(x) \bigr) \leq\beta
\bigl( a + |x|^2 \bigr)^{\beta/2-2} \bigl( (\beta-1+a) +(a+1)
\mu(d-1) \bigr) \leq0\nonumber
\\
\eqntext{\mbox{in } \mathbb R^d\setminus B_1}
\end{eqnarray}
and, for a constant $C>0$ depending only on $d$,
\[
{\mathcal{P}}^-_{\mu,1} \bigl(D^2\phi(x) \bigr) \leq C\mu
\bigl( a + |x|^2 \bigr)^{\beta/2-2} \leq C\mu\lambda_*^{\beta/2-2}
\qquad\mbox{in } B_1.
\]

Now suppose $x_0\in P_k(\omega)$ such that $|x_0| < t (\log t)^{1/d}$.
Then for a small constant~$c>0$ depending only on dimension, the function
\[
\psi(x):= c t^{1+\alpha} (\log t)^3 \lambda_*^{2-\beta/2}
\phi(x-x_0)
\]
satisfies
\[
{\mathcal{P}}^+_{\lambda(x,\omega),1} \bigl(D^2\psi \bigr) \leq1 \qquad\mbox{in } \mathbb R^d.
\]
By the comparison principle,
\[
\psi(0) - v(0) \leq\max_{\partial B_R} (\psi-v) = \max
_{\partial
B_R} \psi,
\]
which yields, for $T:= t(\log t)^{1/d}$,
\begin{eqnarray*}
v(0) & \geq&\psi(0) - \max_{\partial B_R} \psi
\\
& \geq& c t^{1+\alpha} (\log t)^3 \lambda_*^{2-\beta/2} \bigl(
- \bigl(a+T^2 \bigr)^{\beta/2} + \bigl(a+(R-1)^2
\bigr)^{\beta/2} \bigr)
\\
& \geq& c t^{1+\alpha} (\log t)^3 \lambda_*^{2-\beta/2}
T^{\beta} \qquad\qquad\mbox{(since $R > 10T > 100a$)}
\\
& =& \bigl( c\lambda_*^{2-\beta/2} \bigr) T^{1+\alpha+\beta} (\log
t)^q,
\end{eqnarray*}
where $q:= 3-(1+\alpha)/2 > 0$. By taking $\lambda_*:= \alpha/2d$
so that $\alpha+\beta+1=2$, and noting that $R(t) = 10T$, we
obtain~(\ref{eCEbump}).
\end{pf}

The above lemma, after rescaling and in light of~(\ref{eBC}), implies
that the difference of $u^\varepsilon$ and the paraboloid $c(\log t)^q
(1-|x|^2)$, with $\varepsilon=t^{-2}$, cannot achieve its maximum on
$B_1$ except in a boundary strip of $\partial B_1$ of width at most
$t^{-1} = \sqrt\varepsilon$. This easily gives that $u^\varepsilon
\to+\infty$ locally uniformly in $B_1$ with at least rate $|\log
\varepsilon|^q$.

\begin{appendix}\label{OO}
\section*{Appendix: Elementary properties of the~obstacle~problem}
For the convenience of the reader, we briefly review (and sketch the
proofs of) some well-known properties of the obstacle problem
\begin{equation}
\label{obst-a} \cases{\min \bigl\{ F \bigl(D^2w,y \bigr),  w \bigr\}
= 0, &\quad in $V$,
\vspace*{3pt}\cr
w = 0, &\quad on $\partial V$.}
\end{equation}
We have dropped the dependence of $F$ on $\omega$ since the random
environment plays no role here, and we furthermore assume that $F$ is
uniformly elliptic by the remarks preceding Theorem~\ref{H}.

First of all, problem~(\ref{obst-a}) has a unique solution $w\in
C(\overline V)$, which may be expressed as the least nonnegative
supersolution of $F=0$ in $V$:
\begin{equation}
\label{obstdn} w(x; V) = \inf \bigl\{ u(x) \dvtx  u \geq0\mbox{ in } V\mbox{ and } F
\bigl(D^2u,y \bigr) \geq0 \mbox{ in } V \bigr\}.
\end{equation}
This from the Perron method and the fact that the obstacle problem has
a comparison principle. Immediate from this expression is that $w$ is a
global subsolution:
\begin{equation}
\label{obst-sups} F \bigl(D^2w,y \bigr) \geq0 \qquad\mbox{in } V
\end{equation}
as well as the monotonicity property:
\begin{equation}
\label{obst-mono} V \subseteq W \qquad\mbox{implies that } w(\cdot;  V) \leq
w(\cdot; W)\qquad\mbox{on } \overline V.
\end{equation}
In order to use some regularity theory, we also need the fact that $w$ satisfies
\begin{equation}
\label{obst-subs} F \bigl(D^2w,y \bigr) \leq k \chi_{\{ w=0\}}
\qquad\mbox{in } V\qquad\mbox{where }k:= \sup_{y\in V}
\bigl(F(0,y) \bigr)_+.
\end{equation}
This is typically handled by considering an approximate equation with a
penalty term (the Levy--Stampacchia penalization method) whose
solutions satisfy~(\ref{obst-subs}) and showing that their uniform
limit is $w$. We instead opt for a more natural and simpler proof by
showing that $w$ is also given by the formula
\begin{equation}
\label{obstup} \qquad w(x; V) = \sup \bigl\{ u(x) \dvtx  u \leq0 \mbox{ on } \partial V
\mbox{ and } F \bigl(D^2u,y \bigr) \leq k \chi_{\{ u \leq0\}} \mbox{ in }
V \bigr\}.
\end{equation}
It is clear that~(\ref{obstup}) implies~(\ref{obst-subs}). To check
the former, we let $\hat w$ denote the expression on the right-hand
side and observe that, since the zero function belongs to the
admissible class by the definition of $k$, we have $\hat w \geq0$. The
Perron method implies that $\hat w$ satisfies $F(D^2\hat w,y) = 0$ in
$\{ \hat w > 0\}$. Therefore, $\hat w$ is a solution of~(\ref{obst}),
and by uniqueness we deduce $w=\hat w$.
\end{appendix}

\section*{Acknowledgments}
We thank Xavier Cabr\'e, Robert Kohn and Luis Silvestre for helpful discussions and Ofer
Zeitouni for bringing~\cite{BD} to our attention.



\printaddresses

\end{document}